\newtheorem{theorem}{Theorem}
\newtheorem{proposition}{Proposition}
\newtheorem{lemma}{Lemma}
\newtheorem{remark}{Remark}
\newcommand{\eps}{\varepsilon}
\newcommand{\C}{\mathcal{C}}
\newcommand{\D}{\mathcal{D}}
\newcommand{\M}{\mathcal{M}}
\newcommand{\abs}[1]{\left\vert#1\right\vert}
\newcommand\CC{\hbox{C\kern -.58em {\raise .54ex \hbox
			{$\scriptscriptstyle |$}}
		\kern-.55em {\raise .53ex \hbox{$\scriptscriptstyle |$}} }}
\newcommand\qd{\hfill$\sqcap\kern-8.0pt\hbox{$\sqcup$}$}
\newcommand\NN{\hbox{I\kern-.2em\hbox{N}}}
\newcommand\nn{\hbox{I\kern-.2em\hbox{N}}}
\newcommand\RR{I\!\!R}
\newcommand\sRR{{\sl \hbox{I\kern-.2em\hbox{R}}}}
\newcommand\QQ{\hbox{I\kern-.53em\hbox{Q}}}
\newcommand\sign{\hbox{Sign}}
\newcommand{\cs}{$^\dagger$} \newcommand{\cm}{$^\ddagger$}
\begin{document}
 
\title{$BV$-Estimates for Non-Linear Parabolic PDE \\  with Linear Drift}

\author[E. Erraji]{El Mahdi Erraji\cs} 

\author[N. Igbida]{Noureddine Igbida\cm} \thanks{\cm Institut de recherche XLIM-DMI, UMR-CNRS 6172, Facult\'e des Sciences et Techniques, Universit\'e de Limoges, France. Emails:   noureddine.igbida@unilim.fr}

\author[F. Karami]{Fahd Karami\cs} 

\author[D. Meskine]{Driss Meskine\cs}\thanks{\cs  Laboratoire de Mathématique Informatique et modélisation des Systèmes Complèxes, EST Essaouira,  Cadi Ayyad University 
	Marrakesh,   Morocco.   Emails:  el.erraji@uca.ac.ma, fa.karami@uca.ac.ma, dr.meskine@uca.ac.ma.}

\date{\today}
 
 \maketitle

\begin{abstract}
In the present work, we establish space Bounded Variation $(BV)$ regularity of the solution  for a  non-linear parabolic partial differential equations involving a linear drift term.    We study the problem  in a  bounded domain with  mixed Dirichlet-Neumann boundary conditions, a general non-linearity and reasonable  assumptions on the data.    Our results  also cover, as a particular case, the linear transport equation in a bounded domain with an outward-pointing drift vector field on the boundary. 
\end{abstract}

\section{Introduction}
 Over the last few decades, many works have focused their attention to    the degenerate  parabolic PDE  involving a   linear drift term 
\begin{equation} \label{EvolEq00}
	 	\left\{ 
\begin{array}{l}  	\displaystyle \frac{\partial u }{\partial t}  -\Delta p +\nabla \cdot (u  \: V)=f  \\  \\  u\in \beta (p) 
\end{array}\right. \quad   \hbox{ in } Q:= (0,T)\times \Omega .
\end{equation}
 Our aim here is to establish a local bounded variation   (BV for short) estimate for solutions     subject to {Dirichlet-Neumann} mixed boundary condition
and initial data. 
 Here   $\Omega\subset\RR^N$ is a bounded   open set with regular boundary $\displaystyle \partial\Omega =:\Gamma $,  the graph  $\beta$ is maximal monotone and $V\: :\: \Omega\to \RR^N$ is a given vector field.
 
 \medskip 
 The prototypical   equation \eqref{EvolEq00} {has} attracted 
significant attention in recent years due to their ability to model 
phenomena in which transport and nonlinear diffusion interact. 
Such equations arise naturally in population dynamics, biology, 
porous-medium flow, and phase-transition processes.  Typical examples include the porous medium 
equation $\beta(r)=r^m$ (see, e.g., \cite{Vbook}), {crowd} motion where $\beta(r)=\mathrm{sign}(r)$ (see, e.g. \cite{MRS1,IgCrowed}), and 
Stefan-type phase-transition problems.

The analytic study of \eqref{EvolEq00} is rich and challenging. 
Existence of weak solutions can be obtained by several complementary 
methods. Wasserstein variational methods may be useful for the case of  Neumann boundary conditions (cf. \cite{Santambrogio15}). Nonlinear semigroup theory may be applicable when   monotonicity is achievable (cf. \cite{IgPME}). Duality variational methods (minimal flux approach), particularly effective for the general case involving mixed and non-homogeneous boundary conditions, as demonstrated in \cite{IgMF}. 

In contrast to existence, uniqueness remains largely open in the general 
case. Classical comparison arguments ensure uniqueness only when 
$\beta$ is sufficiently regular. For singular or multivalued 
nonlinearities, the situation is far more delicate. Even in the 
porous-medium or Hele--Shaw regimes, uniqueness may require more involved techniques (renormalization à la Diperna-Lions, doubling and dedoubling variables techniques) and  additional 
structural assumptions  (see, for example, 
\cite{IgShaw,IgPME,DiMe,Noemi}).

Further analytical difficulties arise when studying for instance the asymptotic 
behaviour of solutions and their dependence on physical parameters. 
Such questions typically require strong compactness of either the 
density $u$ or the so-called pressure $p$. However, classical energy estimates 
provide weak compactness, namely $u$ in $L^1(Q)$ and $p$ in $L^2(0,T;H^1(\Omega))$, but this 
is insufficient to pass to the limit in nonlinear terms, due to the  degeneracy of the diffusion operator. In this context, 
space \emph{$BV$-estimates} for $u$ play a central role, as they supply the  missing strong compactness required for stability and asymptotic 
analysis. For Dirichlet boundary conditions, space $BV$-type estimates for the case of  constant vector {field} $V$ were obtained in \cite{Ca}, other results may be found in the \cite{DeMeSan} but much less is known in the present framework.
 
 \medskip
 The aim of this work is to establish local space  $BV$-estimates for solutions 
of~\eqref{EvolEq00} under reasonable  
assumptions on the data and on the nonlinearity $\beta$. Our results  also cover, as a particular case, the linear transport equation  obtained by taking $\beta^{-1} \equiv 0$.

\medskip  
\textbf{Plan of the paper.}  In the following section, Section 2, we introduce the main assumptions and state the primary results of this paper. To establish these results, we employ an implicit Euler time discretization scheme. Our analysis {begins} in Section 3, where we prove the crucial estimate for the solution to the associated stationary problem. Section 4 is then devoted to the detailed proofs of the main results concerning the evolution problem.

\section{Main results}
\setcounter{equation}{0}
Let us consider the evolution problem, which will serve as the dynamical framework for our analysis, namely
 
\begin{equation}  \label{Pevol}
	\left\{  \begin{array}{ll} 
		\left. 
		\begin{array}{l}  	\displaystyle \frac{\partial u }{\partial t}  -\Delta p +\nabla \cdot (u  \: V)=f  \\  \\  u\in \beta (p) 
		\end{array}\right\} \quad  & \hbox{ in } Q:= (0,T)\times \Omega\\  \\  
		\displaystyle p= 0  & \hbox{ on }\Sigma_D := (0,T)\times \Gamma_D\\  \\  
		\displaystyle (\nabla p-u\: V)\cdot \nu= 0  & \hbox{ on }\Sigma_N := (0,T)\times \Gamma_N\\  \\      
		\displaystyle  u (0)=u _0 &\hbox{ in }\Omega,\end{array} \right.
\end{equation} 
where   $\nu$ represents the   outward unitary normal to  the boundary $\Gamma,$ $\beta$ is a given maximal monotone graph, $u_0\in L^2(\Omega)$ and  $f\in L^2(Q).$  We assume throughout the analysis that the vector field V satisfies the following set of assumptions: 
 \begin{itemize}
	\item[$(T1)$] Regularity: $V\in L^2(0,T;H^1(\Omega)^N)$ 
	\item[$(T2)$]  Divergence control: $(\nabla \cdot V)^-\in L^\infty(Q)  $ 
		\item[$(T3)$]  Boundary conditions:   $  	V\cdot \nu  \geq 0,$ on   $\Gamma_D,$ and   $  	V\cdot \nu  = 0,$ on   $\Gamma_N.$
\end{itemize}

\medskip 
To study the problem \eqref{Pevol}, we use the {associated} $\eps-$Euler implicit {scheme}, for    an arbitrary $0<\eps\leq \eps_0$ and $n\in \NN^*$ being  such that  $(n+1)\eps=T.$  We consider the sequence of stationary  {problems} given by  :
\begin{equation}	\label{sti}
	\left\{  \begin{array}{ll}\left.
		\begin{array}{l}
			\displaystyle u_{i+1} - \eps \: \Delta p_{i+1}  +  \eps \: \nabla \cdot (u_{i+1}   \: V_i)=u_{i} +\eps  \: f_i  \\
			\displaystyle u_{i+1} \in \beta(p_{i+1}) \end{array}\right\}
		\quad  & \hbox{ in }  \Omega\\  \\
		\displaystyle p_{i+1}= 0  & \hbox{ on }  \Gamma_D\\    \\  
		\displaystyle (\nabla p_{i+1}-u_{i+1}\: V_i)\cdot \nu = 0  & \hbox{ on }  \Gamma_N ,\end{array} \right. \quad i=0,1, ... n-1,
\end{equation}
where,   for each $i=0,...n-1,$ $f_i$ and $V_i$ are  given by
$$f_i = \frac{1}{\eps } \int_{i\eps}^{(i+1)\eps }  f(s)\: ds, \quad V_i = \frac{1}{\eps } \int_{i\eps}^{(i+1)\eps }  V(s)\: ds,\quad \hbox{ a.e. in }\Omega.  $$
Here the integration of the vector field $V$  is to be interpreted on a component-by-component basis.

 \begin{proposition}\label{Peuler}
		Under the assumptions $(T1)$ and $(T2),$ for any $u_0\in L^2(\Omega) $ and $f\in L^2(Q),$  the  sequence of couple  $(u_i,p_i)$  solving the problems \eqref{sti},    for $i={0,\ldots,n-1},$ is well defined ; in the sense that   $u_i \in L^1(\Omega),$  $p_i\in H^1_{\Gamma_D}(\Omega)$ and 
		\begin{equation} 
			\displaystyle  \int_\Omega u_{i+1}\:\xi+   \eps\: \int_\Omega  \nabla {p_{i+1}}\cdot  \nabla\xi  - 
			 \eps\:	\int_\Omega  u_{i+1} \:  V_i\cdot \nabla  \xi    =     \eps\: \int_\Omega f_{i}\: \xi +  \int_\Omega u_i\:\xi  , 
		\end{equation} 
		for any $	\xi\in H^1_{\Gamma_D}(\Omega).$ 
		 	\end{proposition}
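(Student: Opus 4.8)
The plan is to build each pair $(u_{i+1},p_{i+1})$ by solving the single stationary problem
$$u - \eps\,\Delta p + \eps\,\nabla\cdot(u\,V_i) = g, \qquad u\in\beta(p),$$
with $g := u_i + \eps\, f_i$, subject to the stated mixed boundary conditions, and then to iterate in $i$, starting from $u_0\in L^2(\Omega)$. Since $\beta$ is merely maximal monotone, I would first replace it by its Yosida approximation $\beta_\lambda$, which is monotone, Lipschitz and (normalising $0\in\beta(0)$) satisfies $\beta_\lambda(0)=0$. For fixed $\lambda>0$ the regularised operator
$$A_\lambda(p)\colon\xi\longmapsto \int_\Omega \beta_\lambda(p)\,\xi + \eps\int_\Omega\nabla p\cdot\nabla\xi - \eps\int_\Omega\beta_\lambda(p)\,V_i\cdot\nabla\xi$$
maps $H^1_{\Gamma_D}(\Omega)$ into its dual; its principal part $-\eps\Delta$ is monotone and hemicontinuous, while the two zero-order terms depend on $p$ only through $\beta_\lambda(p)$ and are, by the compact embedding $H^1_{\Gamma_D}(\Omega)\hookrightarrow L^2(\Omega)$, compact perturbations. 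Hence $A_\lambda$ is bounded and pseudo-monotone, and existence of $p_\lambda$ with $A_\lambda(p_\lambda)=g$ will follow from the surjectivity theorem for coercive pseudo-monotone operators (Br\'ezis/Leray--Lions), once coercivity is checked.

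The coercivity and the uniform bounds come from two test functions. Testing with $\xi=p_\lambda$ and writing $\beta_\lambda(p_\lambda)\nabla p_\lambda=\nabla B_\lambda(p_\lambda)$, with $B_\lambda(r)=\int_0^r\beta_\lambda(s)\,ds$, an integration by parts turns the drift term into $\eps\int_\Omega(\nabla\cdot V_i)\,B_\lambda(p_\lambda)$, the boundary contribution vanishing since $p_\lambda=0$ on $\Gamma_D$ while $V_i\cdot\nu=0$ on $\Gamma_N$. Using $0\le B_\lambda(r)\le r\,\beta_\lambda(r)$ and the bound $(\nabla\cdot V_i)^-\le\norm{(\nabla\cdot V)^-}_{L^\infty(Q)}$ inherited from $(T2)$, one gets
$$\bigl(1-\eps\,\norm{(\nabla\cdot V)^-}_{L^\infty(Q)}\bigr)\int_\Omega\beta_\lambda(p_\lambda)\,p_\lambda + \eps\,\norm{\nabla p_\lambda}_{L^2}^2 \le \int_\Omega g\,p_\lambda,$$
so that, for $\eps_0$ small enough, Poincar\'e's inequality (valid since $\Gamma_D$ has positive surface measure) yields a bound on $p_\lambda$ in $H^1_{\Gamma_D}(\Omega)$ uniform in $\lambda$. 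The second, crucial, estimate is obtained by testing with $\xi=\beta_\lambda(p_\lambda)=u_\lambda$, which is admissible because $\beta_\lambda(p_\lambda)\in H^1(\Omega)$ and vanishes on $\Gamma_D$: the diffusion term becomes $\eps\int_\Omega\beta_\lambda'(p_\lambda)\abs{\nabla p_\lambda}^2\ge0$ and the drift term becomes $\tfrac{\eps}{2}\int_\Omega(\nabla\cdot V_i)\,u_\lambda^2$ after the same integration by parts, leading to
$$\Bigl(1-\tfrac{\eps}{2}\norm{(\nabla\cdot V)^-}_{L^\infty(Q)}\Bigr)\int_\Omega u_\lambda^2 \le \int_\Omega g\,u_\lambda,$$
hence a uniform bound $\norm{u_\lambda}_{L^2}\le C\,\norm{g}_{L^2}$. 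This is what upgrades the conclusion from $L^1$ to $L^2$ and, in particular, lets the induction on $i$ close with $L^2$ data at every step.

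With these bounds I would extract $p_\lambda\rightharpoonup p$ weakly in $H^1_{\Gamma_D}(\Omega)$ (hence strongly in $L^2(\Omega)$ and a.e. by Rellich) and $u_\lambda\rightharpoonup u$ weakly in $L^2(\Omega)$, and pass to the limit in the weak formulation $A_\lambda(p_\lambda)=g$: the diffusion term converges by weak $H^1$ convergence, the zero-order term by weak $L^2$ convergence, and the drift term $\eps\int_\Omega u_\lambda\,V_i\cdot\nabla\xi$ first for $\xi$ smooth (so that $V_i\cdot\nabla\xi\in L^2$) and then by density. The only genuinely delicate point — the \emph{main obstacle} — is the identification $u\in\beta(p)$. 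Here I would use that $u_\lambda\in\beta(p_\lambda-\lambda u_\lambda)$ by definition of the Yosida approximation, that $p_\lambda-\lambda u_\lambda\to p$ strongly in $L^2(\Omega)$ (since $\lambda u_\lambda\to0$), and that $\int_\Omega u_\lambda\,(p_\lambda-\lambda u_\lambda)\to\int_\Omega u\,p$ by weak--strong convergence; the closedness of the graph of the maximal monotone operator $\beta$ in $L^2(\Omega)\times L^2(\Omega)$ then forces $u\in\beta(p)$. A secondary technical nuisance, to be absorbed by the Sobolev embedding of $H^1$ and the Lipschitz character of $\beta_\lambda$ (with a preliminary truncation of $V_i$ in high dimension if needed), is ensuring the correct integrability of the products $\beta_\lambda(p_\lambda)\,V_i$ throughout the argument.
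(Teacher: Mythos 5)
Your proposal is correct and follows essentially the same route as the paper: reduce to the stationary problem with source $u_i+\eps f_i$ and iterate, regularize $\beta$ by a Lipschitz (Yosida-type) approximation, solve the regularized problem by surjectivity of a coercive operator on $H^1_{\Gamma_D}(\Omega)$ using the integration by parts of the drift term (with $p=0$ on $\Gamma_D$, $V_i\cdot \nu=0$ on $\Gamma_N$, and $(\nabla\cdot V_i)^-$ controlled through $(T2)$), derive uniform $L^2$/$H^1$ bounds by testing with $u_\lambda$ and $p_\lambda$, and pass to the limit identifying $u\in\beta(p)$ by closedness of the maximal monotone graph under weak--strong convergence. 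The only differences are cosmetic --- the paper uses a generic strictly increasing regularization $\beta_\delta$ with graph convergence where you use the Yosida resolvent identity $u_\lambda\in\beta(p_\lambda-\lambda u_\lambda)$, and your explicit flagging of the integrability of the product $\beta_\lambda(p_\lambda)\,V_i$ in high dimension is a technical point the paper passes over silently.
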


 \bigskip 
 Now, for a given $\eps-$time discretization $0=t_0<t_1<t_2<...<t_n<t_{n+1}= T,$ satisfying
$t_{i+1}-t_i \leq \eps,  $  we define the $\eps-$approximate solution by
\begin{equation}\label{epsapprox}
	u_\eps:=  \sum_{i=0} ^{n-1 }  u_{i+1}\chi_{[t_i,t_{i+1})},\quad 	p_\eps:=   \sum_{i=0} ^{n-1 }  p_{i+1}\chi_{[t_i,t_{i+1})}  ,\quad 	f_\eps:=   \sum_{i=0} ^{n-1 }  f_i\chi_{[t_i,t_{i+1})}    , 
\end{equation}
 and      \begin{equation} 	V_\eps:=   \sum_{i=0} ^{n-1 }  V_i\chi_{[t_i,t_{i+1})}
\end{equation} 
See that, taking  
$$\tilde u_\eps(t) =\frac{(t-t_i)u_{i+1} -  (t-t_{i+1})u_i}{\eps}  \quad \hbox{ for any }t\in [t_i,t_{i+1}),\:  i={0,\ldots,n-1},$$
we have 
\begin{equation}\label{weakeps0}
	\partial_t\tilde u_\eps -\Delta p_\eps +\nabla \cdot(u_\eps\: V_\eps)=f_\eps,\quad \hbox{ in }\D'(Q).
\end{equation}
A solution of the evolution problem \eqref{Pevol} may be given by   letting $\eps\to 0$  in \eqref{weakeps0}.  This {is} the aim of the following theorem.

\begin{theorem}\label{texistevolm}
	Under the assumptions $(T1)$,  $(T2)$ and $(T3),$ for any $u_0\in L^2(\Omega) $ and $f\in L^2(Q),$
	there exists $u  \in  L^\infty(0,T;L^{2}(\Omega))$ and $p\in L^2(0,T;H^1_{\Gamma_D}(\Omega)),$ such that, by taking a sub-sequence if necessary as $\eps\to 0,$ we have
	\begin{equation}\label{convueps}
		u_\eps \to u \quad   \hbox{ in } L^2(Q)-\hbox{weak},  
	\end{equation} 
	\begin{equation}\label{convtildeueps}\tilde u_\eps \to u \quad   \hbox{ in } L^2(Q)-\hbox{weak}
	\end{equation} 
	and \begin{equation}\label{convpeps}
		p_\eps \to p \quad \hbox{ in }L^2(0,T;H^1_{\Gamma_D}(\Omega))-\hbox{weak} \quad \hbox{ as }\eps\to 0.
	\end{equation}    
The couple $(u,p)$ is a weak solution of \eqref{Pevol} in the sense that   $u  \in  L^\infty(0,T;L^{2}(\Omega)),$  $\partial_t u \in  L^2(0,T; W^{-1,p'}_{\Gamma_D}(\Omega)) ,$ $u(0)=u_0$  in $\Omega,$  $p \in  L^2(0,T;H^{1}_{\Gamma_D}(\Omega)),$   $u\in   \beta( p)$ a.e. in $Q$  and  
	\begin{equation}  \label{evolweak}
		\frac{d}{dt}\int_\Omega u\: \xi\: dx + \int_\Omega   (\nabla p   -u\: V)  \cdot \nabla \xi \: dx =\int_{\Omega} f\: \xi\: dx,\quad \hbox{ in }\D'([0,T)) \quad \mbox{for any }\; \xi\in W^{1,2}_{\Gamma_D}(\Omega).
	\end{equation}
	Moreover, we have 
	\begin{enumerate}
 \item For any $q\in [1,\infty],$ we have  
		\begin{equation} \label{lquevol} 
			\Vert u(t)\Vert_{L^q(\Omega)}    \leq  \M_q(t):=  \left\{ 
			\begin{array}{lll}
				e^{ (q-1)\:   \int_0^t \Vert (\nabla \cdot V)^-\Vert_{L^\infty(\Omega)}  }  \left( \Vert u_0 \Vert_{L^q(\Omega)} +   \int_0^t   \Vert f (t)\Vert_{L^q(\Omega)} \: dt  \right ) \quad &\hbox{ if } & q<\infty  \\ 
				e^{ \int_0^t  \Vert (\nabla \cdot V)^-\Vert_{L^\infty(\Omega)}  }  \left( \Vert u_0 \Vert_{L^\infty(\Omega)} +   \int_0^t   \Vert f (t)\Vert_{L^\infty(\Omega)} \: dt  \right)&\hbox{ if } & q=\infty.  
			\end{array}\right.  . 
		\end{equation}
		
		\item  For any $t\in [0,T),$ we have 
		\begin{equation}\label{lmuevol}
			\begin{array}{c} 
				\frac{d}{dt} \int_\Omega j(u) \: dx+  \int_\Omega \vert \nabla p\vert^2  \: dx\leq     \int_\Omega f\:p \: dx +       \int _{\Omega}    p \: u  \:  (\nabla \cdot  V )^-    \: dx \quad \hbox{ in }\D'(0,T).
			\end{array} 
		\end{equation} 	       
		
		 	\item If $u_0\geq 0 $ and $f\geq 0,$ then $u\geq 0.$
		
	\end{enumerate}
\end{theorem}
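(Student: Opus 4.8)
The plan is to work entirely at the level of the implicit Euler scheme, for which Proposition~\ref{Peuler} already guarantees well-posedness of each stationary step, and to derive $\eps$-uniform versions of the three announced estimates; these will simultaneously furnish the compactness needed for the passage to the limit and, after lower semicontinuity, the estimates themselves. Throughout I normalize the graph so that $0\in\beta(0)$ and I let $j$ denote the convex lower semicontinuous primitive with $\partial j=\beta^{-1}$ and $j(0)=0$, together with its conjugate $J=j^\ast$ satisfying $\partial J=\beta$. The structural facts I will use repeatedly are the co-monotonicity $\nabla p_{i+1}\cdot\nabla u_{i+1}\ge 0$ (a consequence of $u_{i+1}\in\beta(p_{i+1})$), the Fenchel inequality $0\le J(p)\le p\,u$ valid whenever $u\in\beta(p)$, and the averaged boundary conditions $V_i\cdot\nu\ge0$ on $\Gamma_D$ and $V_i\cdot\nu=0$ on $\Gamma_N$ inherited from $(T3)$.

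For the $L^q$-bound~\eqref{lquevol}, I would first smooth $\beta$ by its Yosida approximation so that $u_{i+1}$ is regular enough and vanishes on $\Gamma_D$ (because $p_{i+1}=0$ there and $0\in\beta(0)$), making $\xi=\abs{u_{i+1}}^{q-2}u_{i+1}$ an admissible test function. Inserting it into the weak form of Proposition~\ref{Peuler}, the diffusion term is nonnegative by co-monotonicity and is discarded, while the drift term is integrated by parts into $\eps\,\tfrac{q-1}{q}\int_\Omega(\nabla\cdot V_i)\abs{u_{i+1}}^q$ plus a boundary contribution carrying the favorable sign thanks to $(T3)$. H\"older's inequality on the right-hand side then yields a recursion of the form $\norm{u_{i+1}}_{L^q}\le\bigl(1+\eps\,C\norm{(\nabla\cdot V_i)^-}_{L^\infty}\bigr)\norm{u_i}_{L^q}+\eps\norm{f_i}_{L^q}$, and a discrete Gr\"onwall argument produces the exponential bound $\M_q$; the case $q=\infty$ follows by letting $q\to\infty$, and the regularization in $\beta$ is removed at the end.

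For~\eqref{lmuevol} I would test the stationary step with the admissible choice $\xi=p_{i+1}$. The subdifferential inequality $\int_\Omega(u_{i+1}-u_i)p_{i+1}\ge\int_\Omega j(u_{i+1})-\int_\Omega j(u_i)$ handles the discrete time derivative, the diffusion term gives $\eps\int_\Omega\abs{\nabla p_{i+1}}^2$, and the drift term is rewritten as $\eps\int_\Omega V_i\cdot\nabla J(p_{i+1})=-\eps\int_\Omega(\nabla\cdot V_i)J(p_{i+1})$, the boundary terms vanishing because $J(0)=0$ on $\Gamma_D$ and $V_i\cdot\nu=0$ on $\Gamma_N$, and then bounded by $\eps\int_\Omega(\nabla\cdot V_i)^-\,u_{i+1}p_{i+1}$ via $J(p)\le p\,u$. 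Summing and passing to the limit with weak lower semicontinuity of $\int_\Omega j(u)$ and of the Dirichlet energy gives~\eqref{lmuevol}. The positivity statement~(3) is obtained by induction: testing with $\xi=-p_{i+1}^-\in H^1_{\Gamma_D}(\Omega)$ and using $u_{i+1}p_{i+1}^-\le0$ (again from $0\in\beta(0)$) together with $f_i\ge0$ and $u_i\ge0$ forces $p_{i+1}^-=0$, hence $u_{i+1}\ge0$.

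Finally, the uniform bounds give $u_\eps$ bounded in $L^\infty(0,T;L^2(\Omega))$ and $p_\eps$ bounded in $L^2(0,T;H^1_{\Gamma_D}(\Omega))$, so Banach--Alaoglu yields the weak limits~\eqref{convueps} and~\eqref{convpeps}; that $\tilde u_\eps$ shares the same limit~\eqref{convtildeueps} follows from $\norm{\tilde u_\eps-u_\eps}\to0$ in a weak norm, itself a consequence of the bound on $\partial_t\tilde u_\eps$ in $L^2(0,T;W^{-1,p'}_{\Gamma_D}(\Omega))$ read off from~\eqref{weakeps0}. Passing to the limit in~\eqref{weakeps0} is immediate for the linear terms, and for the drift $u_\eps V_\eps$ one uses that $V_\eps\to V$ strongly in $L^2(Q)^N$ against the weakly convergent $u_\eps$. \emph{The main obstacle is the identification $u\in\beta(p)$}, which cannot come from weak convergence alone; I would resolve it by a Minty monotonicity argument, using the discrete energy identity to establish $\limsup_{\eps\to0}\iint_Q u_\eps p_\eps\le\iint_Q u\,p$, and then combining $u_\eps\in\beta(p_\eps)$ with the maximal monotonicity of $\beta$ to conclude $u\in\beta(p)$ a.e. in $Q$. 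The initial condition $u(0)=u_0$ is recovered from the weak continuity in time of $\tilde u_\eps$.
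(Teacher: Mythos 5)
Your discretization, a~priori estimates and compactness steps essentially mirror the paper's proof: the $L^q$ bound by testing with $\abs{u_{i+1}}^{q-2}u_{i+1}$ after regularizing $\beta$, the energy inequality via $\xi=p_{i+1}$ and the convexity inequality $\int_\Omega (u_{i+1}-u_i)p_{i+1}\ge \int_\Omega j(u_{i+1})-\int_\Omega j(u_i)$, and the weak limits from Banach--Alaoglu plus the bound on $\partial_t\tilde u_\eps$ in $L^2(0,T;H^{-1}_{\Gamma_D}(\Omega))$ are all the same ingredients, in the same order. The genuine gap is in the step you yourself flag as the main obstacle, the identification $u\in\beta(p)$. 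You propose to extract $\limsup_{\eps\to0}\iint_Q u_\eps p_\eps\le\iint_Q u\,p$ \emph{from the discrete energy identity}, but that quantity is not accessible this way. In the stationary problem the product $\int_\Omega v\,p$ genuinely appears when the equation is tested with $p$ (which is why the elliptic identification in Lemma~\ref{lconveps} can be closed by energy/strong $H^1$ convergence); in the evolution problem, testing \eqref{weakeps0} with $p_\eps$ produces only the telescoping combination $\int_\Omega(u_{i+1}-u_i)p_{i+1}$, which convexity immediately converts into differences of $\int_\Omega j(\cdot)$ --- the standalone quantity $\iint_Q u_\eps p_\eps$ is never controlled. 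Worse, to compare with the limit problem you would need the chain-rule identity $\int_0^T\langle \partial_t u, p\rangle\,dt=\int_\Omega j(u(T))-\int_\Omega j(u_0)$ for the \emph{limit} pair, and that identity is only valid once one knows $u\in\beta(p)$: the argument is circular. A further symptom of the same obstruction is that the right-hand side of the energy inequality \eqref{lmuepsevol} itself contains the product $\iint_Q (\nabla\cdot V)^-\,u_\eps\, p_\eps$, so even passing to the limit in that inequality requires exactly the product convergence you are trying to establish.

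What closes this step --- and is what the paper does --- is time compactness rather than energy comparison: since $\partial_t\tilde u_\eps$ is bounded in $L^2(0,T;H^{-1}_{\Gamma_D}(\Omega))$, $u_\eps$ is bounded in $L^2(Q)$ and $p_\eps$ is bounded in $L^2(0,T;H^1_{\Gamma_D}(\Omega))$, an Aubin--Lions type lemma (the paper invokes the nonlinear versions of \cite{ACM,Moussa}) gives directly $\iint_Q u_\eps p_\eps\,\varphi\to\iint_Q u\,p\,\varphi$ for every $\varphi\in\D(Q)$, and then Minty's trick concludes. You already hold every ingredient for this (you use the very same $H^{-1}$ bound to identify the limit of $\tilde u_\eps$); the repair is simply to reroute the identification through this compactness lemma instead of the energy identity. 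A second, more minor, issue: your positivity argument (test with $-p_{i+1}^-$, deduce $p_{i+1}\ge 0$, hence $u_{i+1}\ge0$) breaks down when $\beta$ is multivalued at $0$ with $\beta(0)$ containing negative values, since $p_{i+1}\ge0$ then says nothing about the sign of $u_{i+1}$ on the set $\{p_{i+1}=0\}$; the paper instead reads $u_{i+1}\ge 0$ directly off the $L^1$-contraction-type estimate \eqref{k-contraction-} with $k=0$, which never passes through the sign of $p$.
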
 

\begin{remark} One sees that under the assumption of Theorem \ref{texistevolm}, we can find a constant $ C(N,\Omega)$ depending only on Poincaré inequality constant, such that 
	\begin{equation} \label{hestimatem} 	 
		\begin{array}{cc}
			\frac{d}{dt} \int_\Omega j( u) \: dx+  \int_\Omega \vert \nabla p\vert^2\: dx    \leq    C (N,\Omega)   \left(  	\int_\Omega \vert f\vert^2\: dx    +	 \Vert (\nabla \cdot V)^-  \Vert_{L^\infty(\Omega)}\: \M_2^2      \right ) \quad \hbox{ in }\D'(0,T). 
		\end{array}	
	\end{equation}   
\end{remark}

\medskip 

\begin{remark}The general uniqueness of the solution stated in Theorem \ref{texistevolm} remains an open and challenging problem. This difficulty persists even in cases where the function $\beta$ is continuous. For the particular settings of the Porous Medium Equation $(\beta(r)=r^\alpha),$ and the Hele-Shaw problem $(\beta(r)=\hbox{Sign}(r)),$  we refer the reader to the works \cite{IgPME,IgShaw,DiMe,Noemi} and the discussions contained therein.

\end{remark}

To set our main result concerning $BV-$estimate on the solution of \eqref{Pevol}, let us remind the reader some basic definitions and tools for  functions of bounded variation.  A given function   $u\in L^1(\Omega)$ is said to be of bounded variation if and only if, for each $i=1,...N,$    
\begin{equation}
	TV_i(u,\Omega)  := \sup\left\{ 	\int_\Omega  u\: \partial_{x_i} \xi \: dx  \:  :\:  \xi\in \mathcal C^1_c(\Omega)  \hbox{ and }\Vert \xi\Vert_{L^\infty(\Omega)} \leq 1\right\}  <\infty, \end{equation}
here   $\C^1_c(\Omega)$ denotes the set of $\C^1-$function compactly supported in $\Omega.$   More generally a function is locally of bounded variation in a domain $\Omega$ if and only if for any open set $\omega\subset\!   \subset \Omega,$   $ TV_i(u,\omega)<\infty$ for any $i=1,...,N.$ 
In general a function  locally of bounded variation (as well as function of bounded variation) in $\Omega,$  may not be differentiable, but by the Riesz representation theorem, their  partial derivatives  in the sense of distributions are  Borel measure in $\Omega.$  This gives rise to the definition of the   vector space of functions of   bounded variation in $\Omega$, usually denoted by    $BV(\Omega),$ as the set of   $u\in L^1(\Omega)$ for which there   are Radon measures $\mu_1,...,\mu_N$ with finite total mass in $\Omega$ such that   
\begin{equation}
	\int_\Omega v\: \partial_{x_i} \xi \: dx =-\int_\Omega \xi\: d\mu_i,\quad \hbox{ for any }\xi\in \mathcal C_{c}(\Omega),\quad \hbox{ for }i=1,...,N. 
\end{equation}
Without abuse of notation  we continue to point out the measures $\mu_i$ by $\partial_{x_i}v$ anyway,  and by $\vert \partial_{x_i}v\vert $   the total variation of $\mu_i.$ Moreover, we'll use  as usual $Dv=(\partial_{x_1}v,...,\partial_{x_N}v)$  the vector valued Radon measure  pointing out the gradient of any function $v\in BV(\Omega),$ and $\vert Dv\vert$ indicates the total variation measure of  $v.$   In particular, for any open set $\omega\subset\!   \subset \Omega,$ $TV_i(v,\omega)=\vert \partial_{x_i}v\vert(\omega)<\infty,$ and the total variation of the function $v$ in  $\omega$  is finite too ; i.e. 
\begin{equation}
	\vert Dv\vert(\omega)= \sup\left\{ 	\int_\Omega  v\: \nabla \cdot \xi \: dx  \:  :\:  \xi\in \mathcal C^1_c(\omega)^N  \hbox{ and }\Vert \xi\Vert_{L^\infty(\Omega)} \leq 1\right\}  <\infty.
\end{equation}
At last, let us remind the reader here  the well known  compactness result for functions of bounded variation : given a sequence $v_n$ of functions in $BV_{loc}(\Omega)$ such that, for any   open set $\omega\subset\!   \subset  \Omega,$ we have 
\begin{equation}
	\sup_n\left\{ \int_\omega \vert v_n\vert\: dx + \vert Dv_n\vert (\omega) \right\} <\infty,
\end{equation} 
there exists  a subsequence that we denote again by $v_n$  which converges in $L^1_{loc}(\Omega)$ to a function   $v\in BV_{loc}(\Omega).$  Moreover, for any  compactly supported continuous function $0\leq \xi$, the limit $v$ satisfies 
\begin{equation}
	\int \xi\:  \vert \partial_{x_i}v \vert \leq \liminf_{n\to\infty }  \int \xi\: \vert \partial_{x_i}v_n \vert ,  
\end{equation}
for any $i=1,...N,$ and 
\begin{equation}
	\int \xi\:  \vert Dv\vert \leq \liminf_{n\to\infty }  \int \xi\:  \vert Dv_n\vert.
\end{equation}

\medskip
Now, we can set our main result concerning space $BV_{loc}$-estimates. 

\begin{theorem}\label{tbvlocuevol}
Assume that the vector field $V$ satisfies  moreover  \begin{itemize}
		\item[$(T'1)$] $V\in L^1(0,T;W^{1,\infty}(\Omega)^N) \cap L^\infty(Q) $  
		\item[$(T'2)$]  $\nabla \cdot V\in L^2(0,T;W^{1,2 }_{loc}(\Omega)) \cap L^\infty(Q)$
		\item[$(T'3)$]    $  	V\cdot \nu  = 0,$ on   $\Gamma_N,$ and moreover     
		 \begin{equation}\label{HypBV}
		 		V(x)\cdot \nu(\pi(x)) \geq 0,\quad \hbox{ for a.e.  }x\in \Omega\setminus \Omega_h, 
		 \end{equation}
	for any $0<h<h_0$ small enough{,} where      $$\Omega_h=\Big\{ x\in \Omega\: :\: d(x,\Gamma)>h \Big\},$$ and      $\pi(x)$  denotes the projection of $x$ on the boundary $\Gamma.$ 
	\end{itemize} 
For a given $f\in L^1(0,T;BV_{loc}(\Omega))\cap L^2(Q)$ and    $u_0\in BV_{loc}(\Omega)\cap L^2(\Omega),$   
	 let us consider $(u,p)$ the solution given by Theorem \ref{texistevolm}. Then, for  any  open set $\omega \subset\!\subset \Omega,$ there exists    $0\leq \xi \in \mathcal C^2_c(\Omega)$ such that   $\xi\chi_{\omega}\equiv 1,$ and there exists a constant $ C:= C(\omega,N,\Omega,  \Vert f\Vert_{L^2}, \Vert (\nabla \cdot V)^-\Vert _\infty{)}$    such that  
	\begin{equation}\label{bvlocuevol} 
		\begin{array}{cc}
			\sum_{j=1}^N    \vert \partial_{x_j} u(t) \vert (\omega)	 \leq C\left( 1+ \int_0^T\!\! \left(\int_\Omega \vert f\vert^2\: dx +	 \Vert (\nabla \cdot V)^-  \Vert_{L^\infty(\Omega)}^2\: \M_2(T)^2  \right)   +  	\sum_{j=1}^N     \int_\Omega   \vert  \partial_{x_j}(\nabla \cdot  V {)} \vert^2 \: dx     \right. \\ 
			\hspace*{1cm}	\left.     +	 \sum_{j=1}^N    	\int_0^T  \vert \partial_{x_j} f(t) \vert(\Omega)+  \sum_{j=1}^N    \vert \partial_{x_j} u_0 \vert(\Omega) \right), \quad \hbox{ for any }t\in (0,T).
		\end{array}
	\end{equation}  
\end{theorem}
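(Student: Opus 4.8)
The plan is to derive a \emph{discrete} $BV$-estimate for the stationary problems \eqref{sti}, to sum it over the time steps by a discrete Gronwall argument, and then to pass to the limit $\eps\to0$ using the lower semicontinuity of the total variation recalled above together with the convergences \eqref{convueps}--\eqref{convpeps} of Theorem \ref{texistevolm}. Fix an open set $\omega\subset\!\subset\Omega$ and a cut-off $0\le\xi\in\C^2_c(\Omega)$ with $\xi\equiv1$ on $\omega$; I choose $\xi$ so that, in the boundary layer $\Omega\setminus\Omega_h$, the level sets of $\xi$ are essentially parallel to $\Gamma$ and $\nabla\xi$ is aligned with $-\nu(\pi(x))$, so that \eqref{HypBV} gives $V\cdot\nabla\xi\le0$ there. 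The core is an incremental-quotient (translation) method applied to the couple $(u_{i+1},p_{i+1})=:(v,q)$ of \eqref{sti}. For a direction $e_j$ and $h$ small, write $D^j_h w(x)=\big(w(x+he_j)-w(x)\big)/h$; since all relevant translates of $\mathrm{supp}\,\xi$ remain in $\Omega$, translating the weak formulation of Proposition \ref{Peuler} produces an equation for $D^j_h v$ that can be tested against $\xi\,S_\delta(D^j_h q)$, where $S_\delta$ is a smooth nondecreasing approximation of the sign function.

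The decisive structural input is the monotonicity of $\beta$: from $v\in\beta(q)$ one gets $\big(v(x+he_j)-v(x)\big)\big(q(x+he_j)-q(x)\big)\ge0$, hence $S_\delta(D^j_h q)\to\mathrm{sign}(D^j_h v)$ and $D^j_h v\,S_\delta(D^j_h q)\to|D^j_h v|$ as $\delta\to0$. With this I analyse three groups of terms. The diffusion term $\eps\int\nabla(D^j_h q)\cdot\nabla\big(\xi S_\delta(D^j_h q)\big)$ splits into the nonnegative part $\eps\int\xi\,S_\delta'(D^j_h q)|\nabla D^j_h q|^2\ge0$, which is discarded, and a remainder that in the limit $\delta\to0$ becomes $\eps\int\nabla|D^j_h q|\cdot\nabla\xi=-\eps\int|D^j_h q|\,\Delta\xi$, controlled by $\|\Delta\xi\|_\infty\int_{\mathrm{supp}\,\xi}|\nabla q|$ and hence, via $a\le\tfrac12(1+a^2)$ and summation, by the ``$1+$\,energy'' furnished by \eqref{hestimatem}. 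For the drift I decompose $D^j_h(vV_i)=V_i^h\,D^j_h v+v\,D^j_h V_i$. After integration by parts the first piece yields the Gronwall term $\|(\nabla\cdot V_i)^-\|_\infty\int\xi|D^j_h v|$ and the boundary-layer term $-\int|D^j_h v|\,V_i\cdot\nabla\xi$, which is $\ge0$ by the choice of $\xi$ and \eqref{HypBV} and is therefore dropped; the second piece contributes, through $v\,\partial_{x_j}(\nabla\cdot V_i)$, a term bounded by Young's inequality by $\tfrac12\int\xi|v|^2+\tfrac12\int|\partial_{x_j}(\nabla\cdot V_i)|^2$, the first summand being controlled by $\M_2$. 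Finally the data give $\eps\int\xi\,D^j_h f_i\,S_\delta(D^j_h q)\le\eps\int\xi\,|D^j_h f_i|$, bounded by $\eps|\partial_{x_j}f_i|(\Omega)$ as $h\to0$.

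Letting $\delta\to0$, then $h\to0$, and summing over $j=1,\dots,N$, I obtain an inequality of the form
\begin{equation*}
	\sum_j\int_\Omega\xi\,|\partial_{x_j}u_{i+1}|\le\big(1+\eps\,\|(\nabla\cdot V_i)^-\|_\infty\big)\sum_j\int_\Omega\xi\,|\partial_{x_j}u_i|+\eps\,R_i,
\end{equation*}
where $R_i$ collects the energy, the $\sum_j\int|\partial_{x_j}(\nabla\cdot V_i)|^2$ and the $\sum_j|\partial_{x_j}f_i|(\Omega)$ contributions. Iterating from $i=0$, using $(n+1)\eps=T$ and the discrete Gronwall lemma, converts the prefactor into the exponential already present in $\M_2$ and turns $\eps\sum_iR_i$ into the time integrals on the right-hand side of \eqref{bvlocuevol}, while the first step contributes $\sum_j|\partial_{x_j}u_0|(\Omega)$. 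This gives a bound for $\sum_j\int_\Omega\xi\,|\partial_{x_j}u_\eps(t)|$ uniform in $\eps$; since $\xi\equiv1$ on $\omega$, the lower semicontinuity of the total variation under the convergence $u_\eps\to u$ then yields \eqref{bvlocuevol}.

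I expect the main obstacle to be the rigorous treatment of the boundary layer. The drift produces the cut-off error $-\int|D^j_h v|\,V_i\cdot\nabla\xi$ supported in the transition region of $\xi$, which may reach $\Omega\setminus\Omega_h$, so the argument does not localize as it would for a driftless equation; preventing variation from being carried inward across $\Gamma$ is exactly where the outward-drift hypothesis \eqref{HypBV} and the adapted geometry of $\xi$ (so that $V\cdot\nabla\xi\le0$) are indispensable. Making the incremental quotients and the passage $\delta\to0$ rigorous near $\Gamma$, and justifying the translated weak formulation there, is the delicate technical point. A secondary difficulty is the control, in the limit $\delta\to0$, of the $S_\delta'$-terms arising from the drift, which must be shown to vanish or be absorbed by means of the monotonicity and the chain rule for Sobolev functions.
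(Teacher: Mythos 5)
Your overall architecture coincides with the paper's: a $BV$-estimate at the level of the discrete stationary problems \eqref{sti}, localized by a cut-off adapted to the boundary geometry so that \eqref{HypBV} makes the drift--cutoff term sign-definite, then a Gronwall iteration in $i$ and passage to the limit $\eps\to 0$ by lower semicontinuity of the variation (this is exactly Theorem \ref{tbvm} plus Lemma \ref{Lbvestim} in the paper). However, your technical engine has a genuine gap: the claim that monotonicity of $\beta$ gives $S_\delta(D^j_h q)\to\mathrm{sign}(D^j_h v)$ and $D^j_h v\,S_\delta(D^j_h q)\to |D^j_h v|$ is false for a general maximal monotone graph. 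Monotonicity only yields $D^j_h v\, D^j_h q\ge 0$; it does not exclude the set where $D^j_h q=0$ while $D^j_h v\neq 0$. This set is exactly what matters in the degenerate cases the theorem is meant to cover: for a graph with a jump (Hele--Shaw, $\beta=\mathrm{Sign}$; Stefan) the pressure $q$ has plateaus on which the density $v$ varies freely, so $S_\delta(D^j_h q)\equiv 0$ there and your zeroth-order term converges to $\int \xi\, D^j_h v\,\mathrm{sign}_0(D^j_h q)$, which is only $\le \int\xi|D^j_h v|$ --- an inequality in the useless direction. The test function built from the pressure increment is blind to precisely the part of the variation of $u$ you need to bound, so the method cannot close for multivalued or flat $\beta$.

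Two further points. First, the ``$S_\delta'$-terms arising from the drift'' that you defer are not a secondary difficulty but a second instance of the same mismatch: they have the form $\eps\int \xi\, D^j_h v\, S_\delta'(D^j_h q)\, V\cdot\nabla D^j_h q$, and the weak-$*$ cancellation used in the paper (its \eqref{triv}) works only when the same quantity sits inside and outside the derivative of the sign approximation; with $v$ outside and $q$ inside, the factor $\tfrac1\delta D^j_h v\,\chi_{[|D^j_h q|\le\delta]}$ can blow up on the plateaus of $q$. Second, in your drift decomposition the piece $v\,D^j_h V_i$ produces, after integration by parts, not only $v\,D^j_h(\nabla\cdot V_i)$ but also the cross term $\nabla v\cdot D^j_h V_i$; this term is of Gronwall type with constant $\|\nabla V_i\|_{L^\infty}$ (not $\|(\nabla\cdot V_i)^-\|_{L^\infty}$), which is why the paper's recursion carries the factor involving $\sum_{i,k}\|\partial_{x_i}(V)_k\|_{L^\infty}$ (the constant $\lambda_1$ of Theorem \ref{tbvm} and $\lambda_V$ in Lemma \ref{Lbvestim}); your stated recursion omits it. The paper's way around all three problems is to regularize the nonlinearity first: replace $\beta$ by a smooth, strictly increasing $\beta_\delta$ (Lemma \ref{lexistreg}), for which the stationary solutions are $H^2_{loc}\cap L^\infty_{loc}$, differentiate the PDE itself (Lemma \ref{lbvreg}) --- where $\mathrm{sign}_0(\partial_{x_i}v_\delta)=\mathrm{sign}_0(\partial_{x_i}p_\delta)$ holds exactly and the chain rule tames the second-order terms --- obtain the $BV$-bound uniformly in $\delta$, and only then pass $\delta\to 0$ using the strong $H^1$ convergence of $p_\delta$ (Lemma \ref{lconveps}) and lower semicontinuity. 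Your difference-quotient scheme could be salvaged only by inserting this same regularization step, at which point differentiating the equation is both simpler and what the paper does.
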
 

\begin{remark}
\begin{enumerate}
	\item 
 See that the condition $(T'3)$  implies definitely $(T3).$   But the converse part is not necessary true.    In fact, with $(T'3)$ , we are assuming  that  $V$ is outpointing along the paths given by the distance function in a neighborhood of $\Gamma.$  As we will see, this assumption can be weaken into an outpointing vector field condition  along a given arbitrary paths in the neighborhood of $\Gamma$ (cf. Remark \ref{Rbvcond}).    Anyway, controlling the outward-pointing orientation of $V$ near the boundary appears crucial for managing the oscillations of the solution (at least for the stationary problem)   and obtaining local   $BV$-estimates.  
 
 \item 
 
 We do not know whether the result of the Theorem holds true in the absence of condition $(T'3).$ However, this condition appears to be   linked to the second-order term and its associate  Neumann boundary condition ;  $\nabla p\cdot \nu =0.$ 
 As we shall demonstrate later, this condition is unnecessary whenever the graph $\beta$ vanishes and the PDE is reduced to a first-order linear conservation  equation with an outpointing vector field on $\partial \Omega.$

 \item In the case where $f\equiv 0$ and $\nabla \cdot V =0,$ we retrieve a more or less well known result 
	\begin{equation} 
	\begin{array}{cc}
		\sum_{j=1}^N    \vert \partial_{x_j} u(t) \vert (\omega)	 \leq    \sum_{j=1}^N    \vert \partial_{x_j} u_0 \vert(\Omega)  , \quad \hbox{ for any }t\in (0,T).
	\end{array}
\end{equation}

\end{enumerate}	 
\end{remark}

\bigskip 
Now, let us focus for a while on the continuity  equation    
\begin{equation}\label{PDEtransport0}
	\left\{ 
	\begin{array}{ll} 
		\displaystyle \frac{\partial  u }{\partial t}  + \nabla\cdot (   u\: V )= f\quad & \hbox{ in } Q\\  \\   
		u(0)= u_0 & \hbox{ in }\Omega, 
	\end{array} 
	\right.
\end{equation} 
where   $Q:=(0,T)\times \Omega,$ with $\Omega$ an open  bounded domain.  Following   \cite{Donadello1,Donadello2}, to incorporate the boundary condition for \eqref{PDEtransport0}, we need to define the inflow boundary as :
$$\Sigma^- := \Big\{x\in \partial \Omega \: :\:  V\cdot \nu <0\Big\},$$ 
and consider the evolution problem \eqref{PDEtransport0} with homogeneous Dirichlet boundary condition. That is  
\begin{equation} \label{PDEtransportBC}
	\left\{ 
	\begin{array}{ll} 
		\displaystyle \frac{\partial  u }{\partial t}  + \nabla\cdot (   u\: V )=  f \quad & \hbox{ in } Q\\  \\   
		u   =0 & \hbox{ on }\Sigma^-:= (0,T)\times\Sigma^- \\  \\  
		u(0)= u_0 & \hbox{ in }\Omega. 
	\end{array} 
	\right.
\end{equation}    
 Under the assumptions $(T1)$ and $(T2)$, we know that (cf. \cite{Donadello1,Donadello2}) 
 for any   $  u_0\in L^\infty(\Omega)$ and $f\in L^\infty(Q), $    
	the problem \eqref{PDEtransportBC}    has a unique   weak solution $ u$ in the sense that 	:   $ u\in L^\infty(Q)$,  $( u\: V)\cdot \nu \in L^\infty(\Sigma) $, $( u\: V)\cdot \nu   =0 ,$ $\mathcal H^ {N-1}-$a.e. on $\Sigma^- ,$ and  
	\begin{equation}\label{weakF0}
		\frac{d}{dt} 	\int_\Omega    u\: \xi \: dx -\int_\Omega  u\: V\cdot \nabla \xi \: dx =  \int_\Omega f\:\xi \: dx , \hbox{ in } {\D'([0,T))},
	\end{equation}	 
	for any $\xi \in \C^\infty_c(\Omega).$

	As a consequence, assuming that 
	\begin{equation}\label{Vassump}
		V\cdot \nu \geq 0 ,\quad \hbox{ on } \partial \Omega,
	\end{equation}
clearly $\Sigma^-=\emptyset,$ and then the problem \eqref{PDEtransport0} admits a unique solution.  In particular, $( u\: V)\cdot \nu   $ is uniquely well defined on $\Sigma.$   	As a consequence of Theorem \ref{tbvlocuevol}, under the assumption \eqref{Vassump}, we can prove the following $BV$-regularity results for the solution $u.$

 \begin{theorem}\label{TBVreg}
	Assume that $V\in L^\infty(0,T;W^{1,\infty}(\Omega)^N)  \cap L^2(0,T; W^{2,2}_{loc}(\Omega)^N) .$ For any  
 $f\in L^1(0,T;BV (\Omega))\cap L^2(Q)$ and $u_0\in BV(\Omega)\cap L^2(\Omega),$  the unique solution $u$ of \eqref{PDEtransport0} satisfies 
 
 \begin{enumerate}
 \item 	 $u\in L^1(0,T; BV(\Omega) )$ 
 \item  there exists a constant $ C:= C(N,\Omega,  \Vert f\Vert_{L^2(Q)}, \Vert (\nabla \cdot V)^-\Vert_{L^\infty(Q)}{)}$,  such that  
	\begin{equation} 
		\begin{array}{cc}
			\sum_{j=1}^N    \vert \partial_{x_j} u(t) \vert (\Omega)	 \leq C\left( 1+ \int_0^T\!\! \left(\int_\Omega \vert f\vert^2\: dx + \M_2(T)^2 	\:  \Vert (\nabla \cdot V)^-  \Vert_{L^\infty(\Omega)}^2  \right)   +  	\sum_{j=1}^N     \int_\Omega   \vert  \partial_{x_j}(\nabla \cdot  V {)} \vert^2 \: dx     \right. \\ 
			\hspace*{1cm}	\left.     +	 \sum_{j=1}^N    	\int_0^T  \vert \partial_{x_j} f(t) \vert(\Omega)+  \sum_{j=1}^N    \vert \partial_{x_j} u_0 \vert(\Omega) \right) \quad \hbox{ for any }t\in (0,T).
		\end{array}
	\end{equation} 
	\end{enumerate} 
	 	\end{theorem}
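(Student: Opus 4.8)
The plan is to read \eqref{PDEtransport0} as the degenerate member $\beta^{-1}\equiv 0$ of the family \eqref{Pevol}: in that case the pressure vanishes identically, $p\equiv 0$, the second-order operator $-\Delta p$ disappears, and the couple $(u,0)$ produced by Theorem \ref{texistevolm} is exactly the unique solution of \eqref{PDEtransportBC} (uniqueness, and the well-definedness of the normal trace $(u\,V)\cdot\nu$ on $\Sigma$, being guaranteed under \eqref{Vassump} by \cite{Donadello1,Donadello2}, since then $\Sigma^-=\emptyset$). Consequently I would not reprove the estimate from scratch but revisit the argument behind Theorem \ref{tbvlocuevol}, tracking the two places where localisation was forced: the cut-off $\xi$ and the geometric condition $(T'3)$ were introduced only to absorb boundary contributions arising from the diffusion term together with its Neumann condition $\nabla p\cdot\nu=0$, and from the drift where the sign of $V\cdot\nu$ was not controlled. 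With $p\equiv 0$ the first source is absent altogether, while under \eqref{Vassump} the second has a favourable sign on the whole of $\Gamma$; this is precisely what will let me take $\xi\equiv 1$ and replace $\omega$ by $\Omega$.

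For the core computation I would differentiate the equation in the direction $x_j$ and set $w_j:=\partial_{x_j}u$, which solves the transport equation $\partial_t w_j+\nabla\cdot(w_j\,V)=g_j$ with source $g_j=\partial_{x_j}f-\nabla\cdot(u\,\partial_{x_j}V)$. Renormalising with a smooth convex approximation $S$ of the absolute value and letting $S\to|\cdot|$, the zeroth-order contribution $\bigl(S'(w_j)w_j-S(w_j)\bigr)\nabla\cdot V$ tends to $0$, and after integration over $\Omega$ one is left with
\[
\frac{d}{dt}\int_\Omega |w_j|\,dx+\int_{\Gamma}|w_j|\,(V\cdot\nu)\,d\mathcal H^{N-1}=\int_\Omega \mathrm{sign}(w_j)\,g_j\,dx .
\]
The boundary integral is nonnegative by \eqref{Vassump} and may simply be discarded; this single sign is the whole reason the estimate globalises, and it is where \eqref{Vassump} does the work that $(T'3)$ and the cut-off did in Theorem \ref{tbvlocuevol}.

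It then remains to estimate the right-hand side and sum over $j$. The term $\int_\Omega \mathrm{sign}(w_j)\,\partial_{x_j}f$ is bounded by $\vert\partial_{x_j}f\vert(\Omega)$. Expanding $\nabla\cdot(u\,\partial_{x_j}V)=\sum_k w_k\,\partial_{x_j}V_k+u\,\partial_{x_j}(\nabla\cdot V)$, the coupling part is controlled, after summation in $j$, by $N\,\Vert\nabla V\Vert_{L^\infty(\Omega)}\sum_k\int_\Omega|w_k|$, while the remaining part is handled by Young's inequality, producing $\tfrac12\sum_j\int_\Omega|\partial_{x_j}(\nabla\cdot V)|^2$ together with a multiple of $\Vert u\Vert_{L^2(\Omega)}^2\le\M_2(T)^2$. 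Gathering these, the quantity $\Phi(t):=\sum_{j}\int_\Omega|w_j(t)|\,dx=\sum_j\vert\partial_{x_j}u(t)\vert(\Omega)$ obeys a differential inequality of the form $\Phi'\le N\Vert\nabla V\Vert_{L^\infty(\Omega)}\,\Phi+R(t)$; Grönwall and an integration in $t$ then yield a bound of the form \eqref{bvlocuevol} with $\omega$ replaced by $\Omega$ and with a constant $C$ free of $\omega$, which is the asserted estimate, and in particular $u\in L^\infty(0,T;BV(\Omega))\subset L^1(0,T;BV(\Omega))$.

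Finally, since $u$ is only $BV$ in space, the differentiation and the renormalisation up to $\Gamma$ are not licit as written; I would therefore run the computation on regularised data $u_0^\delta,f^\delta$ (whose transport solutions are regular because $V\in L^\infty(0,T;W^{1,\infty}(\Omega)^N)$ generates a Lipschitz flow along which regularity propagates and inflow is empty), or equivalently at the level of the stationary scheme \eqref{sti} with $p\equiv 0$ as in Section~3, and then pass to the limit using the lower semicontinuity of the total variation recalled before the statement. The main obstacle I anticipate is exactly this passage up to the boundary: giving rigorous meaning to the normal trace of $w_j$ on $\Gamma$ and justifying, uniformly in the regularisation, that the outflow term carries the sign used above. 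This is the one genuinely new point compared with Theorem \ref{tbvlocuevol}, and it rests on the global validity of \eqref{Vassump} rather than on the near-boundary condition \eqref{HypBV}.
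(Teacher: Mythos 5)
Your reduction to the case $\beta^{-1}\equiv 0$ and your diagnosis of where \eqref{Vassump} must enter are both reasonable, but the proof as proposed has a genuine gap, and it is exactly the step you flag at the end without closing: the boundary computation. Your differential inequality for $\Phi(t)=\sum_j\vert\partial_{x_j}u(t)\vert(\Omega)$ rests on integrating the renormalized equation for $w_j=\partial_{x_j}u$ up to $\Gamma$ and discarding the flux $\int_\Gamma |w_j|\,(V\cdot\nu)\,d\mathcal H^{N-1}\ge 0$. Under the stated hypotheses this term has no meaning: the assumption $V\in L^2(0,T;W^{2,2}_{loc}(\Omega)^N)$ gives extra regularity only in the \emph{interior}, so near $\Gamma$ the field is merely $W^{1,\infty}$, and even for mollified data $u_0^\delta,f^\delta$ the flow is only bi-Lipschitz, so $w_j$ is only $L^\infty$ near the boundary (Lipschitz fields do not propagate $C^1$ regularity), and after $\delta\to 0$ it is only a measure. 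Giving the flux a meaning then requires the normal-trace theory for the divergence-measure field $|w_j|V$, and the sign of such a weak trace does \emph{not} follow from the pointwise inequality $V\cdot\nu\ge 0$ on $\Gamma$ alone: weak normal traces are limits of averages of $|w_j|\,V(x)\cdot\nu(\pi(x))$ over interior collars, so what is actually needed is a sign condition in a neighbourhood of $\Gamma$ — precisely the stronger hypothesis \eqref{HypBV} rather than \eqref{Vassump} — together with uniformity in the regularization. Mollifying $V$ instead does not help, since mollification need not preserve $V\cdot\nu\ge 0$ on $\Gamma$. None of this is supplied, and it is not a routine verification; it is the crux of the theorem.

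The paper closes this gap by never going near $\Gamma$ at all, and this is the idea missing from your proposal. It extends the data: $\tilde V=\varphi\,EV$, with $EV$ a bounded $W^{1,\infty}$-extension of $V$ and $\varphi\in C_c^\infty(\mathbb{R}^N)$, $\varphi\equiv 1$ on $\overline\Omega$, so that $\tilde V$ is compactly supported in a larger domain $D\supset\!\supset\Omega$; similarly $\tilde f,\tilde u_0$. Since $\tilde V\cdot\nu=0$ on $\partial D$, the inflow set of \eqref{tildetransport} is empty, hence by \cite{Donadello1} that problem has a unique solution $\tilde u$, and — this is the only place \eqref{Vassump} is used — uniqueness for \eqref{PDEtransport0} identifies $\tilde u=u$ a.e.\ in $Q$. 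Viewing $\tilde u$ as the solution of \eqref{Pevol} on $D$ with $\beta^{-1}\equiv 0$, the condition $(T'3)$/\eqref{HypBV} holds trivially for $\tilde V$ because it vanishes in a whole neighbourhood of $\partial D$, so Theorem \ref{tbvlocuevol} applies on $D$, and choosing $\omega=\Omega\subset\!\subset D$ yields exactly the asserted estimate on all of $\Omega$: the boundary of $\Omega$ has become an interior region of $D$, and no trace of $w_j$ is ever taken. If you wish to salvage your direct approach, you would have to build the trace argument sketched above (plausibly exploiting that $V$ Lipschitz up to $\overline\Omega$ turns \eqref{Vassump} into $V(x)\cdot\nu(\pi(x))\ge -L\,d(x,\Gamma)$ near $\Gamma$, an error that vanishes at the boundary); the extension argument avoids the issue entirely and is the intended proof.
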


\bigskip 
To prove this theorem we consider   $\tilde V$  {to} be such that $\tilde V\in L^1(0,T;W^{1,\infty}(\RR^N)^N) \cap L^\infty((0,T)\times \RR^N) ^N$,   $\nabla \cdot V\in L^2(0,T;W^{1,2 }_{loc}(\RR^N)) \cap L^\infty((0,T)\times \RR^N)$ and there exists a bounded open set $\Omega \subset \! \subset D\subset \RR^N$  such that  
$$\hbox{support}(\tilde V(t)) \subset \! \subset  D, \quad \hbox{ for a.e. }t\in (0,T) $$
and 
$$ 	\tilde V(t,x) = V(t,x)\qquad\text{for all }x\in \Omega,\ \text{a.e. }t\in (0,T).$$
 More precisely, $\tilde V$ can be constructed as follows:
 	\begin{enumerate}
		\item For a.e.\ $t$, we extend $V(t,\cdot)$ to a function $EV(t,\cdot)$ in 
		$W^{1,\infty}(\mathbb{R}^N)^N$ using a standard bounded extension operator for 
		Lipschitz domains.
		
		\item Choose $\varphi\in C_c^\infty(\mathbb{R}^N)$ such that 
		$\varphi\equiv 1$ on $\overline{\Omega}$.
		
		\item Define
	$\tilde V(t,x) := \varphi(x)\, EV(t,x).$
	\end{enumerate}
	 	Then $\tilde V$ satisfies
$	\tilde V \in L^1(0,T; W^{1,\infty}(\mathbb{R}^N)^N)
	\cap L^\infty((0,T)\times\mathbb{R}^N),$ 
	$$ 
	\hbox{supp}(\tilde V(t,\cdot)) \subseteq \hbox{supp}(\varphi) =:D\Subset \mathbb{R}^N,$$ 
	and coincides with $V$ on $\overline \Omega$.
 Moreover, since 
 $$\nabla\!\cdot\tilde V 
 = \varphi\, \nabla\!\cdot(EV) + \nabla\varphi\cdot EV,$$
 the assumption $V\in L^2(0,T; W^{2,2}_{loc}(\Omega)^N)$ implies that $\nabla \cdot \tilde V \in L^2(0,T; W^{1,2}_{loc}(\Omega)).$ 
 
 \medskip 
 This being said, let us consider the following continuity equation:
\begin{equation} \label{tildetransport}
	\left\{
	\begin{array}{ll}
		\displaystyle \frac{\partial  u }{\partial t}  + \nabla\cdot (   u\: \tilde V )= \tilde f\quad & \hbox{ in } (0,T)\times D\\
		u(0)= \tilde u_0 & \hbox{ in }{D},
	\end{array}
	\right.
\end{equation}
Here, $\tilde f$ and $\tilde u_0$ {are} given by a standard bounded extension operator for Lipschitz domains, compactly supported in $D$.
  Since the extended vector field $\tilde V$ is compactly supported in $D$, its normal component must vanish on the boundary:
$$
\tilde V\cdot \nu =0 \quad \hbox{ on }\partial D.
$$
Consequently, the inflow boundary associated with $\tilde V$ in $D$, defined as $\tilde \Sigma^-:= (0,T)\times(\partial D)^-$, is empty ($\tilde \Sigma^- = \emptyset$).  Thanks to \cite{Donadello1}, the problem \eqref{tildetransport} admits a  unique weak solution, $\tilde u$.  In particular $\tilde u$ satisfies the weak formulation \eqref{weakF0}. By uniqueness, $\tilde u$ defines an extension of $u.$ That is  
$$\tilde u=u \quad \hbox{ a.e. in }Q.  $$
Moreover, by choosing the maximal monotone graph $\beta^{-1} \equiv 0$, one can see that $\tilde u$  is also a solution of the problem  \eqref{Pevol}  defined in $D$:
 \begin{equation}
	\left\{ \begin{array}{ll}
		\left.
		\begin{array}{l}
			\displaystyle \frac{\partial \tilde u }{\partial t}  -\Delta \tilde p +\nabla \cdot (\tilde u  \: \tilde V)=\tilde f  \\
			{\tilde u\in \beta (\tilde p)}
		\end{array}\right\} \quad  & \hbox{ in }  (0,T)\times D  \\
		\displaystyle \tilde p= 0  & \hbox{ on }  (0,T)\times \partial D\\
		\displaystyle  \tilde u (0)=\tilde u _0 &\hbox{ in }D.
	\end{array} \right.
\end{equation} 
Since $\tilde V$ is compactly supported, we deduce by Theorem \eqref{tbvlocuevol}  that $\tilde u\in L^1(0,T;BV_{loc}(D){)}$ and there exists  
{a} constant $ C:= C(\omega,N,D,  \Vert \tilde f\Vert_{L^2((0,T)\times D)}, \Vert (\nabla \cdot \tilde V)^-\Vert _{L^\infty((0,T)\times D)}{)}$    such that  
\begin{equation} 
	\begin{array}{cc}
		\sum_{j=1}^N    \vert \partial_{x_j} \tilde u(t) \vert (\omega)	 \leq C\left( 1+ \int_0^T\!\! \left (\int_D \vert \tilde f\vert^2\: dx +	 \Vert (\nabla \cdot \tilde V)^-  \Vert_{L^\infty(D)}^2  \right)   +  	\sum_{j=1}^N     \int_D   \vert  \partial_{x_j}(\nabla \cdot  \tilde V {)} \vert^2 \: dx     \right. \\ 
		\hspace*{1cm}	\left.     +	 \sum_{j=1}^N    	\int_0^T  \vert \partial_{x_j} \tilde f(t) \vert(D)+  \sum_{j=1}^N    \vert \partial_{x_j} \tilde u_0 \vert(D) \right) \quad \hbox{ for any }t\in (0,T).
	\end{array}
\end{equation}  
 Taking $\omega=\Omega,$ we deduce the result of the theorem.

	\begin{remark} 
		Without the assumption  $V\in L^2(0,T; W^{2,2}_{loc}(\Omega)^N)$, the above construction does  not  in general preserve the additional regularity
		$$	\nabla\!\cdot \tilde V \in L^2(0,T; W^{1,2}_{loc}(\RR^N)).$$
		Indeed, even though $EV$ belongs to $W^{1,\infty}(\mathbb{R}^N)^N$, this does not guarantee
		that the divergence $\nabla\!\cdot(EV)$ belongs to 
		$W^{1,2}_{loc}(\mathbb{R}^N)$, because this would require control of the 
		\emph{second derivatives} of $V$. Such control is not available under the sole 
		assumption $V \in W^{1,\infty}(\Omega)$.
		Moreover, when multiplying by a cutoff function $\varphi$, the divergence becomes
		$\nabla\!\cdot(\varphi\, EV)
		= \varphi\, \nabla\!\cdot(EV) + \nabla\varphi\cdot EV,$
		where the term $\nabla\varphi\cdot EV$ typically fails to belong to 
		$W^{1,2}_{loc}$, unless one imposes additional second-order regularity on $V$, 
		such as $V \in L^2(0,T; W^{2,2}_{loc}(\Omega)^N)$. 	Therefore, the extension preserves the space 
		$L^1(0,T; W^{1,\infty}(\Omega)) \cap L^\infty((0,T)\times \Omega)^N $, 
		but does \emph{not} preserve in general the condition 
		$\nabla\!\cdot V \in L^2(0,T; W^{1,2}_{loc}(\Omega))$.
	\end{remark}

\section{Stationary problem}
 \setcounter{equation}{0}

Thanks to the Euler implicit discretization \eqref{sti}, our analysis starts with the generic stationary problem  
\begin{equation}	\label{st}
	\left\{  \begin{array}{ll} 
		\displaystyle v -\lambda\Delta p + \lambda   \nabla \cdot (v  \: V)=f,  
		\quad  v\in \beta(p),\quad & \hbox{ in }  \Omega\\    \\  
		\displaystyle p= 0  & \hbox{ on }  \Gamma_D\\    \\  
		\displaystyle (\nabla p-v\: V)\cdot \nu = 0  & \hbox{ on }  \Gamma_N ,\end{array} \right.
\end{equation}
where $\displaystyle f \in L^2(\Omega)$, $\lambda>0.$   
A  function   $v \in L^1(\Omega)$ is said to be a weak solution of \eqref{st}  if  $p\in H^1_{\Gamma_D}(\Omega)$ and 
\begin{equation}\label{stwf}
	\displaystyle  \int_\Omega v\:\xi+  \lambda \int_\Omega  \nabla p \cdot  \nabla\xi  - 
	\lambda	\int_\Omega  v \:  V\cdot \nabla  \xi    =     \int_\Omega f\: \xi, \quad \hbox{ for all }	\xi\in H^1_{\Gamma_D}(\Omega).
\end{equation}

\begin{proposition} \label{texistm}
	Assume $V\in W^{1,2}(\Omega),$     $(\nabla \cdot V)^-\in L^\infty(\Omega) $ and satisfies $(T3)$.    For any $f\in L^2(\Omega)$ and $\lambda$, such that  
	\begin{equation}\label{condlambda1}
		0<\lambda <  \lambda_0:= 1/\Vert (\nabla\cdot  V)^-\Vert_{L^\infty(\Omega)} ,  
	\end{equation}
	the problem \eqref{st} has a solution  that we denote by $v.$  Moreover,  $v$ satisfies the following properties: 
	
	\begin{enumerate}
		\item  For any $1\leq q< \infty,$   we have 
	        \begin{equation} \label{lqstat}  
	 	\Big( 1-  (q-1)   \lambda \: \Vert (\nabla\cdot  V)^-\Vert_{L^\infty(\Omega)}    \Big)   \Vert v\Vert_{L^q(\Omega)} \leq    \Vert f\Vert_{L^q(\Omega)}, \quad 
	 		\end{equation}  
	 	 
              \item   \begin{equation}\label{lmst}
		\Big( 1-  \lambda \: \Vert (\nabla\cdot  V)^-\Vert_{L^\infty(\Omega)}    \Big ) \int_\Omega v\: p    \: dx + 	\lambda	\int_\Omega \vert \nabla  p \vert^2\: dx 	 \leq  \int_\Omega f\: p  \: dx    . 
	\end{equation}

	\item For any $k\in \RR,$ we have 
\begin{equation}\label{k-contraction}
	 	\int _\Omega(v-k)^+ \:  dx \leq   \int _\Omega (f- k(1-\lambda \: \Vert (\nabla \cdot V)^-\Vert_{L^\infty(\Omega)})) ^+ \: dx . 
\end{equation}
and \begin{equation}\label{k-contraction-}
	\int _\Omega(k-v)^+ \:  dx \leq   \int _\Omega (-f+ k(1-\lambda \: \Vert (\nabla \cdot V)^-\Vert_{L^\infty(\Omega)})) ^+ \: dx . 
\end{equation} 
	\end{enumerate}
	In particular, we have 
\begin{itemize}
	\item If $f\in L^\infty(\Omega),$ then $v\in L^\infty(\Omega)$ and 
		\begin{equation} \label{linftystat}   	\Big( 1-     \lambda \: \Vert (\nabla\cdot  V)^-\Vert_{L^\infty(\Omega)}    \Big)   \Vert v\Vert_{L^\infty(\Omega)}  		\leq  \Vert f\Vert_{L^\infty(\Omega)}.	\end{equation}  
		\item If $f\geq 0,$ then $v\geq 0.$ 
\end{itemize}
 
\end{proposition}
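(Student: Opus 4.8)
The plan is to construct the solution as the limit of a regularized family, to prove every estimate first at the regularized level where $v$ is a genuine function, and then to pass to the limit. After the standard normalization $0\in\beta(0)$ (translate the graph), replace $\beta$ by its Yosida approximation $\beta_\delta$, which is nondecreasing, Lipschitz and satisfies $\beta_\delta(0)=0$. For fixed $\delta$ I would solve
\[
\beta_\delta(p)-\lambda\Delta p+\lambda\nabla\!\cdot(\beta_\delta(p)\,V)=f,\qquad p\in H^1_{\Gamma_D}(\Omega).
\]
Viewing the left-hand side as an operator $A_\delta\colon H^1_{\Gamma_D}(\Omega)\to H^1_{\Gamma_D}(\Omega)'$, it is bounded, coercive (by the energy estimate below, together with the Poincaré inequality on $H^1_{\Gamma_D}(\Omega)$), and pseudo-monotone: its principal part $\beta_\delta(p)-\lambda\Delta p$ is monotone, while the transport term $p\mapsto\lambda\nabla\!\cdot(\beta_\delta(p)V)$ is a strongly continuous lower-order perturbation thanks to the compact embedding $H^1\hookrightarrow\hookrightarrow L^2$. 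Brezis' surjectivity theorem for coercive pseudo-monotone operators (or a Schauder fixed point for the map $w\mapsto\beta_\delta(p_w)$) then yields a solution $p_\delta$, and I set $v_\delta=\beta_\delta(p_\delta)$.

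The heart of the matter is the family of a priori estimates, all obtained from one device. For an increasing $\psi$ with $\psi(0)=0$, the choice $\xi=\psi(p_\delta)\in H^1_{\Gamma_D}(\Omega)$ in \eqref{stwf} turns the transport term, via the chain rule, into
\[
\int_\Omega v_\delta\,\psi'(p_\delta)\,V\cdot\nabla p_\delta=\int_\Omega V\cdot\nabla G_\delta(p_\delta)=-\int_\Omega G_\delta(p_\delta)\,\nabla\!\cdot V,\qquad G_\delta'=\beta_\delta\,\psi',
\]
where the boundary contribution vanishes precisely by $(T3)$: on $\Gamma_D$ one has $p=0$, hence $G_\delta(p_\delta)=0$, and on $\Gamma_N$ one has $V\cdot\nu=0$. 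Taking $\psi=\mathrm{id}$ gives $G_\delta=\Phi_\delta$ with $0\le\Phi_\delta(s)\le s\,\beta_\delta(s)$, and bounding $\int\Phi_\delta(p_\delta)\nabla\!\cdot V$ from below by $-\Vert(\nabla\!\cdot V)^-\Vert_{L^\infty(\Omega)}\int\Phi_\delta(p_\delta)$ produces exactly the energy estimate \eqref{lmst} with the factor $1-\lambda\Vert(\nabla\!\cdot V)^-\Vert_{L^\infty(\Omega)}$. Taking (after a standard truncation near infinity and near the origin) $\psi(s)=|\beta_\delta(s)|^{q-2}\beta_\delta(s)$ gives $G_\delta=\tfrac{q-1}{q}|v_\delta|^q\ge0$ and $\int v_\delta\psi(p_\delta)=\Vert v_\delta\Vert_{L^q}^q$, so that, after discarding the nonnegative gradient term and applying Hölder on the right, one obtains the $L^q$ bound \eqref{lqstat} (in fact with the sharper constant $\tfrac{q-1}{q}$, which a fortiori implies the stated one). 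For the contraction estimates \eqref{k-contraction}–\eqref{k-contraction-}, I would test with a one-sided regularization of $\mathrm{sign}^+(v_\delta-k)$ written through $p_\delta$ so that it stays in $H^1_{\Gamma_D}(\Omega)$ (admissible on $\Gamma_D$ since $p=0$ there, subtracting the boundary constant in the unfavorable-sign case); the same primitive identity and the sign of $(\nabla\!\cdot V)^-$ again generate the factor $1-\lambda\Vert(\nabla\!\cdot V)^-\Vert_{L^\infty(\Omega)}$.

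It remains to let $\delta\to0$. The energy estimate with $q=2$ gives a uniform $H^1_{\Gamma_D}(\Omega)$-bound on $p_\delta$ and the $L^2$ estimate a uniform bound on $v_\delta$, so up to a subsequence $p_\delta\rightharpoonup p$ in $H^1$, strongly in $L^2$ and a.e., while $v_\delta\rightharpoonup v$ in $L^2$. Since $v_\delta\in\beta(p_\delta-\delta v_\delta)$ with $\delta v_\delta\to0$ in $L^2$, and since $\int_\Omega v_\delta(p_\delta-\delta v_\delta)=\int_\Omega v_\delta p_\delta-\delta\int_\Omega v_\delta^2\to\int_\Omega vp$ (weak times strong convergence), Minty's monotonicity argument yields $v\in\beta(p)$ a.e. Passing to the limit in \eqref{stwf} is then routine: the diffusion term is weakly continuous, and the source and transport terms are linear in $v_\delta$ tested against fixed gradients. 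The estimates survive in the limit by weak lower semicontinuity of the $L^q$-norms and by a.e.\ convergence/Fatou for \eqref{k-contraction}–\eqref{k-contraction-}; finally \eqref{linftystat} follows from \eqref{k-contraction} with $k=\Vert f\Vert_{L^\infty(\Omega)}/(1-\lambda\Vert(\nabla\!\cdot V)^-\Vert_{L^\infty(\Omega)})$, and $v\ge0$ from \eqref{k-contraction-} with $k=0$.

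The main obstacle is that the drift term $\lambda\nabla\!\cdot(vV)$ is neither monotone nor sign-definite, so coercivity and every estimate depend entirely on the cancellation produced by the primitive/chain-rule identity together with the boundary hypothesis $(T3)$ (which annihilates the boundary integral) and the $L^\infty$-control of $(\nabla\!\cdot V)^-$; this interplay is exactly what forces the threshold \eqref{condlambda1}, $\lambda<\lambda_0$. The second delicate point is recovering the multivalued inclusion $v\in\beta(p)$ in the limit, which requires the energy-type convergence $\int_\Omega v_\delta p_\delta\to\int_\Omega vp$ before Minty's lemma applies, and ensuring the sign-truncation test functions used for the contraction estimates genuinely remain in $H^1_{\Gamma_D}(\Omega)$ for every $k\in\RR$.
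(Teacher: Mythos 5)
Your proposal is correct and follows essentially the same route as the paper: regularize $\beta$ by a Lipschitz approximation, obtain existence for the regularized problem from coercivity of the associated operator (where the chain-rule/primitive identity plus $(T3)$ kills the boundary terms and produces the $1-\lambda\Vert(\nabla\cdot V)^-\Vert_{L^\infty(\Omega)}$ factor), derive the $L^q$, energy and $k$-contraction estimates with the same family of test functions $\psi(p_\delta)$, $H^+_\sigma(v_\delta-k)$, and pass to the limit $\delta\to 0$ via weak compactness, strong convergence of $p_\delta$ and Minty's argument. The only differences are cosmetic (Brezis' pseudo-monotone surjectivity in place of the paper's weakly continuous coercive operator, H\"older instead of Young giving a slightly sharper $L^q$ constant, and your more careful treatment of the sign restriction on $k$ needed for the truncated test functions to vanish on $\Gamma_D$).
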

%
 To prove Proposition \ref{texistm},  we proceed by regularization and compactness argument. For each $\delta >0,$ we consider   $\beta_\delta $  a regular Lipschitz continuous  function strictly increasing satisfying $\beta_\delta(0)=0$ and, as $\delta\to0,$  
$$\beta_\delta \to \beta ,\quad \hbox{ in the sense of graph in }\RR\times \RR.  $$
One can take  for instance,  $\beta_\delta$ the Yosida approximation of the application $r\in \RR\to r^{1/m}.$  Then, we  consider the problem 
\begin{equation}	\label{pstbeta}
	\left\{  \begin{array}{ll}  \displaystyle v - \lambda\Delta p +  \lambda \nabla \cdot (v  \: V)=f , \quad   
		\displaystyle v =\beta_\delta   (p)  
		\quad  & \hbox{ in }  \Omega\\   \\  
	\displaystyle p= 0  & \hbox{ on }  \Gamma_D\\    \\  
	\displaystyle (\nabla p-v\: V)\cdot \nu = 0  & \hbox{ on }  \Gamma_N  .\end{array} \right.
\end{equation}

Let us define the functions $H_\sigma$ and $H^+_\sigma$ as smooth approximations of the mappings $\sign_0$ and $\sign_0^+$ respectively. They are given by

\begin{equation}\label{Hsigma}
		H_\sigma   (r) = \left\{ \begin{array}{ll} 
			1 \quad &  \hbox{ if } r \geq \sigma \\
			r/\sigma & \hbox{ if }  |r|\leq\sigma \\
			-1 &\hbox{ if } r \leq  -\sigma
				\end{array}\right.  
\qquad 
		H^+_\sigma   (r) = \left\{ \begin{array}{ll} 
			1 \quad &  \hbox{ if } r\geq \sigma \\
			r/\sigma & \hbox{ if }  0\leq r<\sigma \\
			0 &\hbox{ if }r\leq 0
			 \: .	\end{array}\right.  
	\end{equation} 
where $\sigma$ denotes the regularization parameter that controls the smoothness of the approximation.
\begin{lemma}\label{lexistreg}
	For any $f\in L^2(\Omega)$, $\delta>0$ and  $\lambda$ satisfying \eqref{condlambda1},    the problem 	\eqref{pstbeta} has a     solution $v_\delta ,$ in the sense that  $v_\delta \in L^2(\Omega),$ $p_\delta:=\beta_\delta^{-1} (v_\delta) \in H^1_{\Gamma_D}(\Omega),$        and 
	\begin{equation}
		\label{weakeps}  
		\int_\Omega v_\delta \: \xi \: dx + \lambda \int_\Omega \nabla p_\delta \cdot \nabla \xi \: dx  -\lambda\int_\Omega v_\delta \: V\cdot \nabla \xi \: dx =\int_\Omega f\: \xi\: dx,   \end{equation}
	for any $\xi\in H^1_{\Gamma_D}(\Omega).$ Furthermore,  
	the solution $v_\delta$ satisfies all the properties of Proposition \ref{texistm}.
\end{lemma}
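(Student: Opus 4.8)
The plan is to eliminate $v$ through the constraint $v=\beta_\delta(p)$, so that \eqref{pstbeta} becomes a single quasilinear elliptic equation for $p\in H^1_{\Gamma_D}(\Omega)$: find $p$ with
$$\int_\Omega \beta_\delta(p)\,\xi + \lambda\int_\Omega\nabla p\cdot\nabla\xi - \lambda\int_\Omega\beta_\delta(p)\,V\cdot\nabla\xi = \int_\Omega f\,\xi, \qquad \xi\in H^1_{\Gamma_D}(\Omega),$$
and then to set $v_\delta:=\beta_\delta(p_\delta)$, which lies in $L^2(\Omega)$ because $\beta_\delta$ is globally Lipschitz and $p_\delta\in H^1(\Omega)$; since $\beta_\delta$ is strictly increasing, $p_\delta=\beta_\delta^{-1}(v_\delta)$ as required. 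I would realize the left-hand side as a single operator $A\colon H^1_{\Gamma_D}(\Omega)\to\big(H^1_{\Gamma_D}(\Omega)\big)'$ and obtain existence from the surjectivity theorem for bounded, coercive, pseudo-monotone operators; the weak formulation \eqref{weakeps} is then exactly the identity $A(p_\delta)=f$.

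The decisive point is coercivity. Writing $\Psi_\delta(r):=\int_0^r\beta_\delta(s)\,ds\ge 0$, one has $\beta_\delta(p)\,\nabla p=\nabla\Psi_\delta(p)$, so an integration by parts gives
$$-\lambda\int_\Omega\beta_\delta(p)\,V\cdot\nabla p = \lambda\int_\Omega\Psi_\delta(p)\,\nabla\!\cdot V,$$
the boundary terms vanishing because $\Psi_\delta(p)=0$ on $\Gamma_D$ (where $p=0$) and $V\cdot\nu=0$ on $\Gamma_N$ by $(T3)$. Since $\beta_\delta$ is increasing with $\beta_\delta(0)=0$ we have the elementary bound $0\le\Psi_\delta(r)\le \beta_\delta(r)\,r$, whence
$$\langle A(p),p\rangle = \int_\Omega\beta_\delta(p)\,p + \lambda\int_\Omega|\nabla p|^2 + \lambda\int_\Omega\Psi_\delta(p)\,\nabla\!\cdot V \ge \big(1-\lambda\|(\nabla\!\cdot V)^-\|_{L^\infty(\Omega)}\big)\int_\Omega\beta_\delta(p)\,p + \lambda\int_\Omega|\nabla p|^2.$$
Under \eqref{condlambda1} the first term is nonnegative, so $\langle A(p),p\rangle\ge\lambda\|\nabla p\|_{L^2(\Omega)}^2$, and coercivity on $H^1_{\Gamma_D}(\Omega)$ follows from the Poincaré inequality. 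Boundedness of $A$ is immediate from the Lipschitz bound on $\beta_\delta$ and $V\in W^{1,2}(\Omega)$. For pseudo-monotonicity I would split $A$ into its linear principal part $p\mapsto -\lambda\Delta p$, which is monotone and hemicontinuous, and the two lower-order terms $p\mapsto\beta_\delta(p)$ and $p\mapsto\lambda\nabla\!\cdot(\beta_\delta(p)V)$; using the compact embedding $H^1_{\Gamma_D}(\Omega)\hookrightarrow\hookrightarrow L^2(\Omega)$ together with the Lipschitz continuity of $\beta_\delta$, these lower-order terms are strongly continuous, hence compact perturbations, and a monotone hemicontinuous operator plus a strongly continuous one is pseudo-monotone. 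This yields a solution $p_\delta$, and setting $v_\delta=\beta_\delta(p_\delta)$ gives \eqref{weakeps}.

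It remains to transfer the quantitative properties of Proposition \ref{texistm} to $v_\delta$. These follow by a renormalization argument: testing \eqref{weakeps} with smooth, bounded, monotone functions of $p$ built from the truncations $H_\sigma$ and $H^+_\sigma$ of \eqref{Hsigma} (together with the powers $s\mapsto |s|^{q-1}\,\mathrm{sign}(s)$ applied to $v_\delta$), integrating the transport term by parts exactly as above—so that, again by $(T3)$ and $p=0$ on $\Gamma_D$, only the volume term carrying $\nabla\!\cdot V$ survives and is dominated by $\|(\nabla\!\cdot V)^-\|_{L^\infty(\Omega)}$—and finally letting $\sigma\to0$. Testing against an $H_\sigma$-regularization of $|v_\delta|^{q-1}\,\mathrm{sign}(v_\delta)$ produces the $L^q$ bound \eqref{lqstat}; testing against $p$ itself gives the energy inequality \eqref{lmst}; testing against $H^+_\sigma(v_\delta-k)$ and $H^+_\sigma(k-v_\delta)$ yields the $k$-contraction estimates \eqref{k-contraction}–\eqref{k-contraction-}, and the sign property ($f\ge0\Rightarrow v_\delta\ge0$) follows by taking $k=0$.

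I expect the main obstacle to be the treatment of the first-order transport term: verifying that it is a genuinely compact (strongly continuous) perturbation so that $A$ is pseudo-monotone, and, in the coercivity estimate, that it is completely absorbed by the monotone zero-order term thanks to the smallness condition $\lambda<\lambda_0$. Passing to the limit in $\int_\Omega\beta_\delta(p_n)\,V\cdot\nabla\xi$ with only $V\in W^{1,2}(\Omega)$ requires some care—one first establishes the convergence for $\xi$ with $\nabla\xi\in L^\infty(\Omega)$ and then argues by density—but this is routine once the compact embedding and the Lipschitz continuity of $\beta_\delta$ are in hand.
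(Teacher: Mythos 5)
Your proposal is correct and follows essentially the same route as the paper: the same reduction to a single quasilinear equation for $p$, the same operator $A$ with the identical coercivity computation (integration by parts of the drift term, boundary terms killed by $p=0$ on $\Gamma_D$ and $V\cdot\nu=0$ on $\Gamma_N$, absorption via $0\le\int_0^p\beta_\delta\le p\,\beta_\delta(p)$ and $\lambda<\lambda_0$), followed by the same test functions ($v^{q-1}$-type, $p$, $H^+_\sigma(v-k)$, $k=0$) for the estimates. The only difference is the abstract surjectivity theorem invoked — you verify pseudo-monotonicity via a compact-perturbation argument where the paper simply asserts that $A$ is bounded, weakly continuous and coercive — and your version is, if anything, a more explicit justification of the same step.
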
  
\begin{proof}
	To simplify the presentation we omit the subscript  $\delta$ in the notations of $(v_\delta,p_\delta)$ and $\beta_\delta$ throughout this proof. We denote by  $H^{-1}_{\Gamma_D} (\Omega) $ the usual topological dual space of $H^1_{\Gamma_D}(\Omega)$ and $ \langle .,. \rangle$ the associate  duality pairing. Consider  $A\: :\:  H^1_{\Gamma_D}(\Omega)\to  H^{-1}_{\Gamma_D} (\Omega)$ defined  by 
	$$ \langle A p,\xi\rangle =  \int_\Omega\beta  (p)\: \xi\: dx + \lambda\: \int _\Omega\nabla p\cdot \nabla \xi \: dx -\lambda\: \int_\Omega\beta  (p) \: V\cdot \nabla \xi \: dx,\quad \hbox{ for any }\xi,\: p\in H^1_{\Gamma_D}(\Omega), $$
	is a bounded weakly continuous operator.  Moreover,    for any $\lambda$ satisfying \eqref{condlambda1},   $A$ is coercive. Indeed, for any $p\in H^1_{\Gamma_D}(\Omega),$ we have 
	\begin{equation} \label{coercive}
		\begin{array}{lll}	\langle A p,p\rangle &=& \int_\Omega\beta  (p)\: p \: dx + \lambda\: \int_\Omega \vert \nabla p\vert^2  \: dx -\lambda\:  \int_\Omega\beta  (p) \: V\cdot \nabla p  \: dx \\ 
			&=&  \int_\Omega\beta  (p)\: p \: dx  +  \lambda\:  \int_\Omega \vert \nabla p\vert^2  \: dx -\lambda\:  \int_\Omega   V\:   \cdot  \nabla \left(  \int_0^p  \beta  (r) dr\right)    \: dx  \\  
			&\geq&  \int_\Omega\beta  (p)\: p \: dx  +  \lambda\:  \int _\Omega\vert \nabla p\vert^2  \: dx -\lambda\:  \int _\Omega p  \beta  (p)   \:   \left(\nabla \cdot   V\right)^-   \:    dx    -  \lambda\: \underbrace{ \int _{\Gamma_N}     p  \beta  (p)  \:   V \cdot   \nu       \: dx}_{= 0}  \\ 
			&\geq&  \lambda\:   \int_\Omega \vert \nabla p\vert^2  \: dx +(1- \lambda \: \Vert (\nabla \cdot V)^-\Vert_{L^\infty(\Omega)} )\int_\Omega  \beta  (p)\: p \: dx  \\ 
			&\geq&  \lambda\:   \int_\Omega \vert \nabla p\vert^2  \: dx ,  
		\end{array}
	\end{equation}
	
	where we use the fact that $V\cdot \nu = 0$ on $\Gamma_N$ and $ 0\leq \int_0^ p\beta (r)dr\leq p\beta (p) .$ 
	So, for any $f\in  H^{-1}_{\Gamma_D} (\Omega)  $ the problem $A p=f$ has a solution $p \in H^1_{\Gamma_D}(\Omega).$   
	Now, for each  $1<q<\infty,$   taking   $   v ^{q-1}  $ as a test function in \eqref{weakeps}. Using the fact that 
	$v\nabla (  v ^{q-1}  )=   \frac{q-1}{q}\:  \nabla \vert v\vert^q  $  
	and $ \nabla p\cdot \nabla  (  v ^{q-1}  )  \geq 0  $ a.e. in $\Omega $  
	we get 
	\begin{eqnarray}
		\int_\Omega \vert v\vert^q  \: d x
		&\leq& \int_\Omega f\:   v ^{q-1} \: dx +\lambda \frac{q-1}{q}  \int _\Omega V\cdot  \nabla \vert v\vert ^q   \: dx 
		\\ 
		&\leq&  \int_\Omega f  v ^{q-1 }  \: dx   -  \lambda \frac{q-1}{q}  \int_\Omega  \nabla \cdot  V\:   \vert v\vert ^q   \: dx      +  \lambda \frac{q-1}{q} \underbrace{ \int_{\Gamma_N }   V\cdot \nu \:   \vert v\vert ^q   \: dx  }_{=0}	\\ 
		&\leq&   \int_\Omega f  v ^{q-1 }  \: dx   + \lambda \frac{q-1}{q}  \int _\Omega\left(  \nabla \cdot  V\right)^- \:   \vert v\vert ^q   \: dx         \\  
		&\leq& \frac{1}{q} \int_\Omega \vert f\vert^q   \: dx +    \frac{q-1}{q} \int _\Omega\vert v\vert^q   \: dx   + \lambda \frac{q-1}{q} \left\Vert  \left(  \nabla \cdot  V\right)^- \right\Vert_{L^\infty(\Omega)}   \int_\Omega \:   \vert v\vert ^q   \: dx, 
	\end{eqnarray}
	where we use     again   Young inequality. This implies      \eqref{lqstat} for $v_\eps$ in the case where $1<q<\infty.$ 
	The case $q=1$ follows clearly  by letting $q\to 1.$    	For the remaining case $q=  \infty ,$   we take  $H^+_\sigma (v-k)\in H^1_{\Gamma_D}(\Omega),$  for a given $k\geq 0$ and $\sigma>0,$     as a test function in  \eqref{weakeps}.	Then, letting $\sigma \to 0$ and using  the fact that $ \nabla p\cdot \nabla  H_\sigma^+  (v-k)\geq 0$ a.e. in $\Omega,$   we  see that 
	\begin{equation}
		\begin{array}{lll}
			\int_\Omega (v-k)^+ \:  dx & \leq&  \int_\Omega  (f- k(1+\lambda \: \nabla \cdot V)) \:\sign_0^+(v-k)  \: dx+\lambda \lim_{\sigma \to 0} 
			\int_\Omega (v-k)\: V\cdot \nabla H_\sigma^+ (v-k)  \: dx \\ 
			&\leq&  \int _\Omega (f- k(1+\lambda \: \nabla \cdot V)) \:\sign_0^+(v-k) \: dx,
	\end{array}  \end{equation} 
	where we use the fact that 
	$\lim_{\sigma \to 0} 
	\int_\Omega (v-k)\: V\cdot \nabla H_\sigma^+ (v-k)  \: dx=\lim_{\sigma \to 0} \frac{1}{\sigma}
	\int_{0\leq v-k\leq \sigma} (v-k)\: V\cdot \nabla (v-k) \:   \: dx= 0.$ Thus  \eqref{k-contraction}. \\ 
	Now,  taking 
	$	k=\frac{\Vert f\Vert_{L^\infty(\Omega)}}{1-\lambda\: \Vert (\nabla \cdot V)^-\Vert_{L^\infty(\Omega)} },$ 
	we deduce that  $v\leq \frac{\Vert f\Vert_{L^\infty(\Omega)}}{1-\lambda\: \Vert (\nabla \cdot V)^-\Vert_{L^\infty(\Omega)} }.$ Similarly, we apply the same argument with $H^+_\sigma (-v+k)$ as a test function, we obtain  
	$  	v\geq - \frac{\Vert f\Vert_{L^\infty(\Omega)}}{1-\lambda\: \Vert (\nabla \cdot V)^-\Vert_{L^\infty(\Omega)} }.  $ 
	This finished to proof of  \eqref{linftystat}.  
	To prove \eqref{lmst} for $p_\eps,$  we take  $p$ as a   in \eqref{weakeps} and work as in 
	\eqref{coercive},  
	we obtain 
	\begin{eqnarray}
		\lambda	\int _\Omega \vert \nabla p\vert^2\: dx &=&    \int_\Omega  fp\: dx -\int_\Omega vp\: dx -\lambda \int_\Omega \nabla\cdot V  \:    \left(  \int_0^p  \beta  (r) dr\right) \: dx \\ 
		&\leq& \int _\Omega fp\: dx -\int_\Omega vp\: dx +\lambda\: \Vert (\nabla \cdot V)^- \Vert_{L^\infty(\Omega)} \: \int_\Omega   vp  \: dx . 
	\end{eqnarray}
	Thus \eqref{lmst}.  At last, assuming $f\geq 0,$ one sees easily that taking $k=0$ in \eqref{k-contraction-} we get $v\geq 0.$

\end{proof}

\begin{lemma}\label{lconveps}
	Under the assumption of Proposition \ref{texistm}, by taking a sub-sequence $\delta\to 0$ if necessary, we have 
	\begin{equation}\label{weakueps}
		v_\delta\to v \quad \hbox{ in }L^2(\Omega)\hbox{-weak}
	\end{equation}  
	and 
	\begin{equation}\label{strongpeps}
		p_\delta\to p \quad \hbox{ in }H^1_{\Gamma_D}(\Omega). 
	\end{equation}    
	Moreover, $v$ is a weak solution of \eqref{st}. 
\end{lemma}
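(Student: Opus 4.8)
The plan is to pass to the limit $\delta\to 0$ in the regularized problem \eqref{pstbeta}, combining weak/strong compactness with the monotonicity of the graphs $\beta_\delta$.

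First I would collect the uniform bounds. By Lemma \ref{lexistreg} each $v_\delta$ enjoys all the estimates of Proposition \ref{texistm}; in particular \eqref{lqstat} with $q=2$ (admissible because $\lambda<\lambda_0$ forces $1-\lambda\|(\nabla\cdot V)^-\|_{L^\infty(\Omega)}>0$, cf. \eqref{condlambda1}) gives a bound on $\|v_\delta\|_{L^2(\Omega)}$ independent of $\delta$. Since $v_\delta p_\delta=\beta_\delta(p_\delta)p_\delta\ge 0$, estimate \eqref{lmst} reduces to $\lambda\int_\Omega|\nabla p_\delta|^2\le\int_\Omega f\,p_\delta$, whence, using the Poincaré inequality on $H^1_{\Gamma_D}(\Omega)$ and Young's inequality, $\|p_\delta\|_{H^1_{\Gamma_D}(\Omega)}$ is also bounded uniformly in $\delta$. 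Extracting a subsequence, I obtain $v_\delta\rightharpoonup v$ in $L^2(\Omega)$-weak, which is \eqref{weakueps}, together with $p_\delta\rightharpoonup p$ in $H^1_{\Gamma_D}(\Omega)$-weak; by the Rellich--Kondrachov compact embedding $H^1(\Omega)\hookrightarrow\hookrightarrow L^2(\Omega)$ one may also assume $p_\delta\to p$ strongly in $L^2(\Omega)$ and a.e.

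Next I would identify the limit and pass to the limit in the equation. Because $v_\delta=\beta_\delta(p_\delta)$ with $\beta_\delta\to\beta$ in the sense of graphs, $v_\delta\rightharpoonup v$ weakly and $p_\delta\to p$ strongly in $L^2(\Omega)$, the weak--strong pairing gives $\int_\Omega v_\delta p_\delta\to\int_\Omega v p$; the standard Minty argument for maximal monotone graphs then yields $v\in\beta(p)$ a.e. in $\Omega$. With this in hand I pass to the limit in \eqref{weakeps}: the mass and diffusion terms converge by the weak convergences of $v_\delta$ and $\nabla p_\delta$, while the drift term $\lambda\int_\Omega v_\delta\,V\cdot\nabla\xi$ converges to $\lambda\int_\Omega v\,V\cdot\nabla\xi$ by the weak $L^2$-convergence of $v_\delta$ tested against the fixed field $V\cdot\nabla\xi$ (first for sufficiently regular $\xi$, then by density in $H^1_{\Gamma_D}(\Omega)$). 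This produces \eqref{stwf}, so $(v,p)$ is a weak solution of \eqref{st}.

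It remains to upgrade $p_\delta\to p$ to strong convergence in $H^1_{\Gamma_D}(\Omega)$, which I regard as the main difficulty. The idea is to prove the energy convergence $\lambda\int_\Omega|\nabla p_\delta|^2\to\lambda\int_\Omega|\nabla p|^2$; combined with $\nabla p_\delta\rightharpoonup\nabla p$ this forces $\nabla p_\delta\to\nabla p$ in $L^2(\Omega)$ and hence \eqref{strongpeps}. Testing \eqref{weakeps} with $p_\delta$ gives $\lambda\int_\Omega|\nabla p_\delta|^2=\int_\Omega f p_\delta-\int_\Omega v_\delta p_\delta+\lambda\int_\Omega v_\delta\,V\cdot\nabla p_\delta$; the first two terms converge to $\int_\Omega f p$ and $\int_\Omega v p$ exactly as above. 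The delicate term is the drift integral $\int_\Omega v_\delta\,V\cdot\nabla p_\delta$, a product of two merely weakly convergent sequences. Here I would exploit the chain rule $v_\delta\nabla p_\delta=\nabla B_\delta(p_\delta)$, with $B_\delta(s)=\int_0^s\beta_\delta$, and integrate by parts to rewrite it as $-\int_\Omega(\nabla\cdot V)\,B_\delta(p_\delta)$, the boundary contributions vanishing because $p_\delta=0$ on $\Gamma_D$ and $V\cdot\nu=0$ on $\Gamma_N$. Using $p_\delta\to p$ a.e., the graph convergence $B_\delta\to\mathcal{B}$ (the convex primitive of $\beta$), and the uniform integrability coming from the $L^2$ bounds, one passes to the limit in this term and, reversing the integration by parts via the chain-rule identity $v\nabla p=\nabla\mathcal{B}(p)$ for the limiting monotone graph, identifies the limit with $\lambda\int_\Omega v\,V\cdot\nabla p$. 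This closes the energy convergence. The two points I expect to require the most care are the justification of $v\nabla p=\nabla\mathcal{B}(p)$ for the multivalued limit graph (where $\beta$ is single-valued a.e. on $\{\nabla p\neq 0\}$, the level sets carrying the multivaluedness having vanishing gradient) and the integrability needed to pair $B_\delta(p_\delta)$ against $\nabla\cdot V$, which here is only controlled in $L^2(\Omega)$, its negative part being in $L^\infty(\Omega)$.
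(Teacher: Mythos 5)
Your proposal follows essentially the same route as the paper's proof: the same uniform bounds from Lemma \ref{lexistreg} combined with Poincar\'e and Young, the same weak compactness and passage to the limit in \eqref{weakeps}, and the same energy-comparison argument for the strong $H^1_{\Gamma_D}(\Omega)$ convergence, obtained by testing with $p_\delta$, rewriting the drift term through the primitive $\int_0^{p_\delta}\beta_\delta$ integrated by parts against $\nabla\cdot V$, and matching the limiting energy with the identity satisfied by the limit solution $(v,p)$. The two points you flag as delicate --- the chain-rule identity $v\,\nabla p=\nabla\mathcal{B}(p)$ for the multivalued limit graph, and the pairing of the merely $L^1$-convergent primitives against $\nabla\cdot V\in L^2(\Omega)$ --- are exactly the steps the paper's own proof passes over in silence, so your argument is, if anything, the more explicit one.
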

\begin{proof}  Using Lemma \ref{lexistreg} as well as Young and Poincar\'e inequalities, we see that the  sequences  $v_\delta$ and $p_\delta $  are bounded  in  $L^2(\Omega)$ and $H^1_{\Gamma_D}(\Omega),$ respectively.  So, there exists a subsequence that we denote again by $v_\delta$ and $p_\delta $   such that \eqref{weakueps} is fulfilled,    
	\begin{equation}\label{weakpeps}
	 v_\delta\to v \quad \hbox{ in }L^2(\Omega)\hbox{-weak},   \quad	p_\delta\to p\quad \hbox{ in }H^1_{\Gamma_D}(\Omega)\hbox{-weak} 
	\end{equation}   
	and $v\in \beta(p) .$   	Letting $\delta\to 0$ in \eqref{weakeps}, we obtain that $v$ is a weak solution of \eqref{st}. Let us  prove that actually \eqref{weakpeps} holds to be true strongly in $H^1_{\Gamma_D}(\Omega).$ Indeed,  taking $p_\delta$ as a test function, we get  
	\begin{equation}  
		\begin{array}{ll} 
			\lambda \int _\Omega\vert \nabla p_\delta\vert^2\: dx  &=   \int_\Omega   (f- v_\delta )\: p_\delta\: dx +\lambda  \int_\Omega    V \cdot \nabla  \left(\int_0^{p_\delta}  \beta_\delta(r)dr\right) \: dx\\ 
			&= \int_\Omega   (f- v_\delta )\: p_\delta\: dx -\lambda    \int _\Omega     \nabla \cdot   V   \:     \int_0^{p_\delta}  \beta_\delta(r)dr  \: dx    . 
		\end{array}
	\end{equation}     
	Since $\int_0^r \beta_\delta(s)\: ds$ converges to  $\int_0^r \beta(s)\: ds,$ for any $r\in \RR,$ $p_\delta\to p$ a.e. in $\Omega$ and $\left\vert  \int_0^{p_\delta} \beta_\delta(s)\: ds\right\vert \leq v_\delta\: p_\delta$ which is bounded in $L^1(\Omega)$ by \eqref{lmst}, we have  
	$$    \int_0^{p_\delta} \beta_\delta(s)\: ds \to     \int_0^{p} \beta_0(s)\: ds ,\quad \hbox{ in } L^1(\Omega). $$ 
	So, in one hand we have  
	\begin{equation} 
		\begin{array}{ll}  	\lim_{\delta\to 0}  
			\lambda 	\int_\Omega \vert \nabla p_\delta\vert^2\: dx  &=  \int_\Omega   (f- v )\: p\: dx -\lambda   \int _\Omega     \nabla \cdot   V   \:      \int_0^{p} \beta_0(s) \: ds \:dx   .    	
		\end{array}
	\end{equation}  
	On the other, since $v$ is a weak solution of \eqref{st}, one sees easily that 
	\begin{equation}
		\lambda \int _\Omega\vert \nabla p\vert^2\: dx =  \int _\Omega  (f- v )\: p\: dx -\lambda    \int _\Omega     \nabla \cdot   V   \:     \int_0^{p} \beta_0(s)\: ds \:  dx  \: ; 
	\end{equation}  
	which implies that  $ 	\lim_{\delta\to 0}  
	\int_\Omega \vert \nabla p_\delta\vert^2\: dx=   \int_\Omega \vert \nabla p\vert^2\: dx.  $	Combing this with the weak convergence of $\nabla p_\delta,$ we deduce  the strong convergence \eqref{strongpeps}. 
\end{proof}

\begin{remark}
	One sees in the  proof that the results of Lemma \ref{lconveps}  remain to be true if one replace $f$ in \eqref{pstbeta} by a sequence of $f_\delta\in L^2(\Omega)$ such that 
	$$f_\delta \to f \quad \hbox{ in }L^2(\Omega), \quad \hbox{ as }\delta \to0.  $$
\end{remark}

\begin{proof}[\textbf{Proof of Proposition \ref{texistm}}]  
	The proof follows by Lemma \ref{lconveps}.  Moreover, the estimates hold to be true by letting $\delta\to 0,$    in the    estimate \eqref{lqstat} and \eqref{lmst} for $v_\delta$ and $p_\delta.$ 
\end{proof}

 \bigskip 
For the proof of $BV-$estimates of Theorem  \eqref{tbvlocuevol},we  now proceed to prove the following result for the {stationary problem}

\begin{theorem}  \label{tbvm}
Assume that $V\in  W^{1,\infty}(\Omega)^N ,$    $\nabla \cdot V\in  W^{1,2 }_{loc}(\Omega)$
and  satisfies \eqref{HypBV}. Let $v$ be the solution of \eqref{st} given by Lemma \ref{lconveps}.  Then, for any 
$$0<\lambda < \lambda_1:=  \left(\sum_{i,k} \Vert \partial_{x_i} V_k \Vert_{L^\infty(\Omega)}\right)^{-1},$$   
  $v\in BV_{loc}(\Omega)$, and   we have 
	\begin{equation} \label{bvstat} 
		\begin{array}{c}
			(1-\lambda \: \sum_{i,k} \Vert \partial_{x_i} V_k \Vert_{L^\infty(\Omega)}) 	\sum_{i=1}^N \int _\Omega    \omega_h  \:  {d\: \vert \partial_{x_i}v\vert}    \leq  \lambda	\sum_{i=1}^N    \int_\Omega  (\Delta \omega_h)^+  \:  \vert \partial_{x_i} p \vert   \: dx   +  	\sum_{i=1}^N \int_\Omega        \omega_h \: {d\:  \vert \partial_{x_i} f\vert}      \\  + \lambda   \sum_{i=1}^N   	  \int_\Omega         \omega_h \:  \vert v\vert \:   \vert  \partial_{x_i}   ( \nabla \cdot  V   )\vert     \: dx     ,
		\end{array}
	\end{equation} 
	for any   $0\leq \omega_h\in \mathcal H^2(\Omega_h)$   compactly supported in $\Omega,$ such that $\omega_h \equiv 1$ in  $    \Omega_h$ and 
	\begin{equation}  \label{HypsupportV2}  
		\	\int_{\Omega\setminus \Omega_h}  \varphi\:  V\cdot \nabla \omega_h  \: dx \leq  0,\quad \hbox{ for any }0\leq \varphi\in L^2(\Omega).
	\end{equation} 
\end{theorem}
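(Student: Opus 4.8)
The plan is to prove \eqref{bvstat} first for the regularized pairs $(v_\delta,p_\delta)$ solving \eqref{pstbeta}, which were constructed in Lemma \ref{lconveps} and satisfy $v_\delta=\beta_\delta(p_\delta)$ with $\beta_\delta$ smooth and strictly increasing and $v_\delta,p_\delta\in H^1_{\Gamma_D}(\Omega)$, and then to let $\delta\to0$. I suppress the subscript $\delta$ below. Fix a direction $e_j$; for small $|h|$ set $D_h^jg(x)=\big(g(x+he_j)-g(x)\big)/h$ and let $\tau_h^j$ denote the corresponding translation, so that every computation stays at the level of first derivatives (this is what avoids having to give a meaning to $\Delta\partial_{x_j}p$). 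Testing \eqref{weakeps} with $\xi=-D_{-h}^j\psi$, where $\psi$ is compactly supported in $\Omega$, and using the discrete integration-by-parts rule together with $D_h^j(vV)=(D_h^jv)\,\tau_h^jV+v\,D_h^jV$, yields
\begin{equation}
\int_\Omega w_j^h\,\psi+\lambda\int_\Omega\nabla q_j^h\cdot\nabla\psi-\lambda\int_\Omega\big(w_j^h\,\tau_h^jV+v\,D_h^jV\big)\cdot\nabla\psi=\int_\Omega (D_h^jf)\,\psi,
\end{equation}
where $w_j^h:=D_h^jv$ and $q_j^h:=D_h^jp$ belong to $L^2_{loc}$ and $H^1_{loc}$ respectively.

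I then take $\psi=\omega_h\,H_\sigma(q_j^h)$ with $H_\sigma$ as in \eqref{Hsigma}. Since $\beta_\delta$ is strictly increasing, $w_j^h$ and $q_j^h$ have the same sign and vanish on the same set, so $w_j^hH_\sigma(q_j^h)\to|w_j^h|$ as $\sigma\to0$ and the zero-order term produces the left-hand side $\int_\Omega\omega_h|w_j^h|$. For the second-order term I write $\nabla\psi=H_\sigma(q_j^h)\nabla\omega_h+\omega_h H_\sigma'(q_j^h)\nabla q_j^h$; the contribution $\lambda\int_\Omega\omega_h H_\sigma'(q_j^h)|\nabla q_j^h|^2\ge0$ has a favorable sign and is dropped, while $\lambda\int_\Omega H_\sigma(q_j^h)\nabla q_j^h\cdot\nabla\omega_h=\lambda\int_\Omega\nabla\Theta_\sigma(q_j^h)\cdot\nabla\omega_h=-\lambda\int_\Omega\Theta_\sigma(q_j^h)\Delta\omega_h$, with $\Theta_\sigma(s)=\int_0^sH_\sigma(r)\,dr$, tends to $-\lambda\int_\Omega|q_j^h|\Delta\omega_h\le\lambda\int_\Omega(\Delta\omega_h)^+|q_j^h|$. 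The right-hand side is bounded by $\int_\Omega\omega_h|D_h^jf|$, whose $\limsup$ as $h\to0$ is at most $\int_\Omega\omega_h\,d|\partial_{x_j}f|$.

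The drift term splits accordingly. The piece $\lambda\int_\Omega w_j^h\tau_h^jV\cdot\nabla\psi$ contains, after $\sigma\to0$, a term supported on $\{|q_j^h|<\sigma\}$ that vanishes because $\nabla q_j^h=0$ a.e.\ on $\{q_j^h=0\}$, and the decisive term $\lambda\int_\Omega|w_j^h|\,\tau_h^jV\cdot\nabla\omega_h$. This is the crux of the proof and the main obstacle: since $\nabla\omega_h$ is supported in $\Omega\setminus\Omega_h$, replacing $\tau_h^jV$ by $V$ up to an $O(h)$ error (licit as $V$ is Lipschitz) and invoking the structural hypothesis \eqref{HypsupportV2} with $\varphi=|w_j^h|\ge0$ shows $\int_{\Omega\setminus\Omega_h}|w_j^h|\,V\cdot\nabla\omega_h\le0$, so this term is nonpositive and may be discarded; this is exactly where the outward-pointing condition \eqref{HypBV} near $\Gamma$ prevents boundary oscillations from spoiling the estimate. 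For the piece involving $v\,D_h^jV$ I integrate by parts back, using $\nabla\cdot(v\,D_h^jV)=\nabla v\cdot D_h^jV+v\,D_h^j(\nabla\cdot V)$, so that in absolute value it is controlled by $\lambda\int_\Omega\omega_h\sum_k|\partial_{x_k}v|\,|D_h^jV_k|+\lambda\int_\Omega\omega_h|v|\,|D_h^j(\nabla\cdot V)|$. Summing over $j$ and using $|D_h^jV_k|\le\Vert\partial_{x_j}V_k\Vert_{L^\infty(\Omega)}$, the first sum is bounded by $\lambda\big(\sum_{i,k}\Vert\partial_{x_i}V_k\Vert_{L^\infty(\Omega)}\big)\sum_k\int_\Omega\omega_h|\partial_{x_k}v|$ and is absorbed into the left-hand side (this requires $\lambda<\lambda_1$), while the second converges, thanks to $\nabla\cdot V\in W^{1,2}_{loc}(\Omega)$, to $\lambda\sum_j\int_\Omega\omega_h|v|\,|\partial_{x_j}(\nabla\cdot V)|$. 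Collecting these bounds, letting $\sigma\to0$ then $h\to0$ and summing over $j$ gives \eqref{bvstat} for $(v_\delta,p_\delta)$ with the factor $1-\lambda\sum_{i,k}\Vert\partial_{x_i}V_k\Vert_{L^\infty(\Omega)}$.

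It remains to pass to the limit $\delta\to0$. The right-hand side just obtained is uniformly bounded in $\delta$, by Proposition \ref{texistm} (giving $\Vert v_\delta\Vert_{L^2(\Omega)}$) and by the $H^1$-convergence $p_\delta\to p$ of Lemma \ref{lconveps}; hence $v_\delta$ is bounded in $BV(\omega)$ for every $\omega\subset\subset\Omega$, and $BV$-compactness yields $v\in BV_{loc}(\Omega)$ together with, along a subsequence, $v_\delta\to v$ in $L^1_{loc}(\Omega)$. This strong convergence (whence $|v_\delta|\rightharpoonup|v|$ in $L^2$, from the $L^2$-bound and a.e.\ convergence) lets $\int_\Omega\omega_h|v_\delta|\,|\partial_{x_j}(\nabla\cdot V)|$ pass to the limit, the strong $H^1$-convergence of $p_\delta$ handles $\int_\Omega(\Delta\omega_h)^+|\partial_{x_j}p_\delta|$, and the lower semicontinuity of the weighted total variation gives $\int_\Omega\omega_h\,d|\partial_{x_j}v|\le\liminf_\delta\int_\Omega\omega_h|\partial_{x_j}v_\delta|$ on the left, completing the proof. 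Thus the conceptual heart is the sign of the boundary-layer drift term, available only through \eqref{HypsupportV2}, while the rest is the careful bookkeeping of the triple limit $\sigma\to0$, $h\to0$, $\delta\to0$.
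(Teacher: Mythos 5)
Your proof is correct and delivers \eqref{bvstat} with exactly the paper's constants, but it follows a genuinely different technical route. The paper's proof goes through Lemma \ref{lbvreg}: it regularizes \emph{both} $\beta$ and $f$ (so that $f\in W^{1,2}_{loc}(\Omega)$), asserts that the regularized solution satisfies $v,\,p\in H^2_{loc}(\Omega)\cap L^\infty_{loc}(\Omega)$ by elliptic regularity, differentiates the PDE in $x_i$, tests the differentiated equation with $\xi\,H_\sigma(\partial_{x_i}v)$, and handles the second-order cross term through the chain rule (the computation involving $\beta'$ and $\beta''$), obtaining the Kato-type inequality \eqref{bvestreg} in $\D'(\Omega)$; Theorem \ref{tbvm} then follows by testing \eqref{bvestreg} with $\omega_h$ and passing to the limit in the regularization. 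You instead stay at the level of the weak formulation \eqref{weakeps}, replace differentiation by difference quotients, and test with $\omega_h H_\sigma(D^j_h p)$ --- a function of the \emph{pressure} increment rather than of the density derivative. This buys three real simplifications: (i) no second-order regularity is ever invoked, whereas the paper's claim $v,\,p\in H^2_{loc}\cap L^\infty_{loc}$ is the least justified step of Lemma \ref{lbvreg}; (ii) the coercive cross term becomes $\lambda\int\omega_h H_\sigma'(q^h_j)\vert\nabla q^h_j\vert^2\ge 0$ at sight, with no $\beta''$ computation; (iii) $f$ need not be regularized, since difference quotients of a $BV_{loc}$ function are controlled directly by its variation measure. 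The price is the bookkeeping of translations, which you handle correctly: the $O(h)$ replacement of $\tau^j_h V$ by $V$ (using $V\in W^{1,\infty}$) before invoking \eqref{HypsupportV2} with $\varphi=\vert w^h_j\vert$, and the Lipschitz bound $\vert D^j_h v_\delta\vert \le L_\delta \vert D^j_h p_\delta\vert$, which is what actually makes the $H_\sigma'$-drift term vanish as $\sigma\to 0$ (your stated reason, $\nabla q^h_j=0$ a.e.\ on $\{q^h_j=0\}$, needs this domination to conclude by dominated convergence). The structural use of the hypotheses is identical to the paper's --- \eqref{HypsupportV2} kills the boundary-layer drift term, $\lambda<\lambda_1$ permits absorption (which is legitimate since $\sum_j\int\omega_h\vert\partial_{x_j}v_\delta\vert<\infty$ at fixed $\delta$) --- and your final passage $\delta\to0$ (BV compactness, lower semicontinuity, strong $H^1$ convergence of $p_\delta$ from Lemma \ref{lconveps}, weak $L^2$ convergence of $\vert v_\delta\vert$) coincides with the paper's limit step.

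One sign slip in your write-up, which happens to be self-correcting: the term $\lambda\int H_\sigma(q^h_j)\nabla q^h_j\cdot\nabla\omega_h=-\lambda\int\Theta_\sigma(q^h_j)\Delta\omega_h$ sits on the \emph{left-hand side} of your identity, so what must be bounded above is its negative. The inequality you display,
\begin{equation}
	-\lambda\int_\Omega \vert q^h_j\vert\,\Delta\omega_h\,dx \;\le\; \lambda\int_\Omega (\Delta\omega_h)^+\,\vert q^h_j\vert\,dx,
\end{equation}
is false as written (it would produce $(\Delta\omega_h)^-$), but the one you actually need,
\begin{equation}
	+\lambda\int_\Omega \vert q^h_j\vert\,\Delta\omega_h\,dx \;\le\; \lambda\int_\Omega (\Delta\omega_h)^+\,\vert q^h_j\vert\,dx,
\end{equation}
is true, and with it your assembled estimate is exactly \eqref{bvstat}.
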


\begin{remark} 
	Under the assumption $(T'3)$,  there exists at least one $w_h$ satisfying the  assumptions of Theorem  \ref{tbvm}. Indeed,   It is enough to take $\omega_h(x)=\eta_h(d(.,\Gamma)),$ for any $x\in \Omega,$ where $\eta_h\:  :\:   [0,\infty)\to \RR^+$ is  a  nondecreasing $\C^2$-function {on} $(0,\infty)$ such that   $\eta_h \equiv 1$  in $[	h,\infty)$. In this case, 
	$$\nabla \omega_h =\eta_h'(d(.,\Gamma))\: \nabla d(.,\Gamma), $$ 
	so that,   for any  $0\leq \varphi\in L^2(\Omega),$ we have 
	\begin{eqnarray*}   
		\	\int_{\Omega\setminus \Omega_h}  \varphi\:  V\cdot \nabla \omega_h  \: dx &=& \int_{\Omega\setminus \Omega_h} \eta_h'(d(.,\Gamma))\:   \varphi\:  V\cdot \nabla d(.,\Gamma)   \: dx  \leq   0,
	\end{eqnarray*}
	where we use \eqref{HypBV} and the fact that $ \nabla d(.,\Gamma) =-\nu(\pi(x))$ a.e. in $\Omega\setminus \Omega_h.$ 
	This function may be {defined}  
	\begin{equation} 
		\eta_h(r)= \left\{  \begin{array}{ll}
			0\quad &\hbox{ if } 0\leq r\leq c_1h\\   
			e^{\frac{-C_h}{r^2-c_1^2h^2 }}\quad &\hbox{ if } c_1 h\leq r\leq  c_2h\\   
			1-	e^{\frac{-C_h}{h^2 -r^2}} \quad &\hbox{ if } c_2h\leq r\leq h\\   
			1\quad &\hbox{ if }  h \leq r 	\end{array}  \right.
	\end{equation} 
	with $0<c_1<c_2<1$ and $C_h>0$ given such that $2c_2^2-c_1^2=1$ and $e^{\frac{-C_h}{M_h}} = 1-e^{\frac{-C_h}{M_h}},$ 
	where $M_h:= (c_2^2-c_1^2)h^2 =  (1-c_2^2)h^2.$   For instance one can take $c_1=1/2$ and $c_2=  \sqrt{5}/(2\sqrt{2}).$  See that 
	\begin{equation} 
		\eta_h'(r)= \left\{  \begin{array}{ll}
			0\quad &\hbox{ if } 0\leq r\leq c_1h\\   
			\frac{2rC_h}{(r^2-c_1^2h^2)^2}e^{\frac{-C_h}{r^2-c_1^2h^2 }}\quad &\hbox{ if } c_1 h\leq r\leq  c_2h\\   
			\frac{2rC_h}{(h^2-r^2)^2} 	e^{\frac{-C_h}{h^2 -r^2}} \quad &\hbox{ if } c_2h\leq r\leq h\\   
			0\quad &\hbox{ if }  h \leq r 	\end{array}  \right.
	\end{equation} 
	is continuous and {differentiable} at least on $\RR\setminus \{ c_2h\}.$ Thus $\eta_h\in H^2(\Omega).$  
\end{remark}

To prove Theorem \ref{tbvm}  we use again the regularized problem  \eqref{pstbeta} and we  let $\eps\to 0.$  To begin with, we prove first the following lemma concerning any weak solutions of the general problem 
\begin{equation}\label{eqalpha}	  
	\displaystyle  v -  \Delta \beta^{-1} (v)+     \nabla \cdot (v  \: V)=f  \hbox{ in } \Omega, 
\end{equation}
$\beta$  is a given  nondecreasing function assumed to be regular (at least $\C^2$).

\begin{lemma} \label{lbvreg}
Under the assumption of Theorem \ref{tbvm}, we assume  moreover that    $f\in W^{1,2}_{loc}(\Omega)$. Let us consider   $v\in H^1_{loc}(\Omega)\cap L^\infty_{loc}(\Omega)$ satisfying   \eqref{eqalpha} in $\D'(\Omega).$  Then,   for each $i=1,..N,$ we have    
	\begin{equation}\label{bvestreg} 
		\begin{array}{c}   
			\vert \partial_{x_i}	 v \vert - 
			\sum_{k=1}^N  \vert    \partial_{x_k} v\vert   \:    \sum_{k=1}^N  \vert       \partial_{x_i} V_k  \vert  -    \Delta 	\vert \partial_{x_i}  \beta^{-1} (v)  \vert  +      \nabla \cdot( \vert \partial_{x_i}  v\vert \: V)   \\   \leq \vert 	\partial_{x_i} f \vert 
			+   \vert v\vert \:\vert  \partial_{x_i}(\nabla \cdot  V) \vert  \quad \hbox{ in } \D'(\Omega).
		\end{array} 
	\end{equation}
\end{lemma}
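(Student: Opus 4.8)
The plan is to differentiate \eqref{eqalpha} with respect to $x_i$ and then run a Kato-type (regularized sign) argument on the resulting equation. Writing $\mu_i:=\partial_{x_i}v$ and $\psi_i:=\partial_{x_i}\beta^{-1}(v)$, differentiation of \eqref{eqalpha} gives, in $\D'(\Omega)$,
\[
\mu_i-\Delta\psi_i+\nabla\cdot(\mu_i\,V)+\nabla\cdot(v\,\partial_{x_i}V)=\partial_{x_i}f,
\]
where I have used $\partial_{x_i}\nabla\cdot(vV)=\nabla\cdot(\mu_i V)+\nabla\cdot(v\,\partial_{x_i}V)$ and expanded $\nabla\cdot(v\,\partial_{x_i}V)=\sum_{k}\partial_{x_k}v\,\partial_{x_i}V_k+v\,\partial_{x_i}(\nabla\cdot V)$. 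The structural fact that makes the argument work is that, $\beta^{-1}$ being nondecreasing (strictly increasing after the regularization $\beta=\beta_\delta$), one has $\so(\mu_i)=\so(\psi_i)$ a.e.; this is what lets me treat the second-order term through $\psi_i$ while the transport and source terms are treated through $\mu_i$.

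To turn this identity into the desired inequality I would multiply by the regularized sign $H_\sigma$ of the derivative and integrate against a fixed $0\le\zeta\in\C^\infty_c(\Omega)$, then let $\sigma\to0$. Three mechanisms then produce the three terms on the left of \eqref{bvestreg}. First, $\mu_i\,H_\sigma(\cdot)\to|\mu_i|=|\partial_{x_i}v|$. Second, for the diffusion term, integrating by parts moves derivatives onto $\zeta$ and onto $H_\sigma$; the resulting contribution $\int H_\sigma'(\psi_i)\,|\nabla\psi_i|^2\,\zeta\ge0$ has a favorable sign and is discarded, and using $\so(\mu_i)=\so(\psi_i)$ together with the convexity of $r\mapsto|r|$ yields, in the limit, the Kato inequality $\so(\psi_i)\,\Delta\psi_i\le\Delta|\psi_i|$, so that $-\Delta|\partial_{x_i}\beta^{-1}(v)|$ appears with the correct sign. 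Third, writing $\nabla\cdot(\mu_i V)=V\cdot\nabla\mu_i+\mu_i\,\nabla\cdot V$ and using the chain rule $H_\sigma(\mu_i)\,V\cdot\nabla\mu_i=V\cdot\nabla S_\sigma(\mu_i)$ (with $S_\sigma'=H_\sigma$, $S_\sigma\to|\cdot|$), the two pieces recombine in the limit into $\nabla\cdot(|\partial_{x_i}v|\,V)$.

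It then remains to bound the right-hand terms carried along: $H_\sigma(\mu_i)\,\partial_{x_i}f\le|\partial_{x_i}f|$, the coupling $H_\sigma(\mu_i)\,v\,\partial_{x_i}(\nabla\cdot V)\le|v|\,|\partial_{x_i}(\nabla\cdot V)|$, and the commutator $H_\sigma(\mu_i)\sum_k\partial_{x_k}v\,\partial_{x_i}V_k\le\big(\sum_k|\partial_{x_k}v|\big)\big(\sum_k|\partial_{x_i}V_k|\big)$, which is exactly the crossed term in \eqref{bvestreg}. Collecting everything and letting $\sigma\to0$ delivers \eqref{bvestreg} in $\D'(\Omega)$. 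The hard part will be the rigorous passage to the limit in the \emph{second-order} term and the simultaneous bookkeeping of the two signs $\so(\mu_i)=\so(\psi_i)$: this is where the smoothness of the regularized nonlinearity and the hypotheses $f\in W^{1,2}_{loc}(\Omega)$, $\nabla\cdot V\in W^{1,2}_{loc}(\Omega)$ and $v\in H^1_{loc}(\Omega)\cap L^\infty_{loc}(\Omega)$ enter, through the local regularity $\psi_i\in H^1_{loc}(\Omega)$ extracted from \eqref{eqalpha} by elliptic estimates, which guarantees that every product above is at least $L^1_{loc}$ and that each integration by parts and each limit is legitimate.
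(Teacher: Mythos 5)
Your overall architecture matches the paper's: differentiate \eqref{eqalpha} in $x_i$, multiply by a regularized sign $H_\sigma$ against a nonnegative cutoff, let $\sigma\to0$, keep the favorable diffusion contribution, and estimate the commutator and source terms exactly as they appear in \eqref{bvestreg}. But there is a genuine gap at the point you yourself flag as ``the hard part'': your treatments of the second-order term and of the transport term are mutually inconsistent, because they implicitly use two different multipliers on the same equation. Writing $\mu_i=\partial_{x_i}v$ and $\psi_i=\partial_{x_i}\beta^{-1}(v)$ as you do, the favorable term $\int H_\sigma'(\psi_i)\,|\nabla\psi_i|^2\,\zeta\ge 0$ appears only if the multiplier is $\zeta H_\sigma(\psi_i)$, while your chain rule $H_\sigma(\mu_i)\,V\cdot\nabla\mu_i=V\cdot\nabla S_\sigma(\mu_i)$ requires the multiplier $\zeta H_\sigma(\mu_i)$. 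You must choose one. If you choose $\zeta H_\sigma(\psi_i)$, the transport remainder $\int \zeta\,\mu_i\,H_\sigma'(\psi_i)\,V\cdot\nabla\psi_i\,dx$ is not controlled unless $(\beta^{-1})'$ is bounded below by a positive constant, and the choice collapses entirely in the degenerate case (for $\beta^{-1}\equiv 0$, needed for the transport application of Theorem \ref{TBVreg}, one has $\psi_i\equiv 0$ and the multiplier is trivial, so nothing is proved). If you choose $\zeta H_\sigma(\mu_i)$, which is the paper's choice, then the diffusion term is not $\int H_\sigma'(\psi_i)|\nabla\psi_i|^2\zeta$ but $\int \zeta\, H_\sigma'(\mu_i)\,\nabla\psi_i\cdot\nabla\mu_i\,dx$, whose sign is not evident; simply invoking ``Kato'' does not close this, precisely because the two signs $\sign_0(\mu_i)$ and $\sign_0(\psi_i)$ need not coincide where $(\beta^{-1})'$ degenerates.

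The paper closes exactly this gap using the $\C^2$ regularity of $\beta^{-1}$: by the chain rule, $\nabla\psi_i=(\beta^{-1})'(v)\,\nabla\mu_i+\mu_i\,(\beta^{-1})''(v)\,\nabla v$, hence $\int \zeta H_\sigma'(\mu_i)\,\nabla\psi_i\cdot\nabla\mu_i = \int \zeta H_\sigma'(\mu_i)\,(\beta^{-1})'(v)\,|\nabla\mu_i|^2 + \int \zeta\,\big[\mu_i H_\sigma'(\mu_i)\big]\,(\beta^{-1})''(v)\,\nabla v\cdot\nabla\mu_i$; the first piece is nonnegative and is discarded, and the second vanishes as $\sigma\to 0$ because $\mu_iH_\sigma'(\mu_i)=\sigma^{-1}\mu_i\chi_{[\vert\mu_i\vert\le\sigma]}$ is bounded by $1$ and tends to $0$ a.e.\ --- the same mechanism (\eqref{triv} in the paper) that kills your transport remainder. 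The identification needed afterwards, $\sign_0(\mu_i)\,\nabla\psi_i=\nabla\vert\psi_i\vert$ a.e.\ (which produces $-\Delta\vert\partial_{x_i}\beta^{-1}(v)\vert$), survives degeneracy because $\nabla\psi_i=0$ a.e.\ on $\{\psi_i=0\}$; so the paper's multiplier works in the lemma's full generality, while the $H_\sigma(\psi_i)$ route does not. Two further bookkeeping points: the whole computation requires $\mu_i=\partial_{x_i}v\in H^1_{loc}(\Omega)$, i.e.\ $v\in H^2_{loc}(\Omega)$ (the paper asserts $v,\,\beta^{-1}(v)\in H^2_{loc}(\Omega)\cap L^\infty_{loc}(\Omega)$ at the outset), not merely $\psi_i\in H^1_{loc}(\Omega)$ as in your last paragraph --- otherwise neither $\nabla\mu_i$ nor the test function $\zeta H_\sigma(\mu_i)$ makes sense; and the cutoff can be taken in $H^2$ with compact support (as the paper does, with a density argument), which is what later allows $\xi=\omega_h$ in Theorem \ref{tbvm}.
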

\begin{proof}  Set $p:=\beta^{-1}(v).$  Thanks to   \eqref{eqalpha} and the regularity of $f$ and $V,$ it is not difficult to see that   $v,\: p\in H^2_{loc}(\Omega)\cap L^\infty_{loc}(\Omega),$   and for  each $i=1,...N,$  the partial derivatives $\partial_{x_i}v$ and $\partial_{x_i} p$ satisfy the following equation  
	\begin{equation}\label{bvint1}
		\partial_{x_i}	 v -  \Delta 	\partial_{x_i} p + 	    \nabla \cdot(\partial_{x_i}v\: V  )=	\partial_{x_i} f -  ( \nabla v\cdot   \partial_{x_i} V  +  v\: \partial_{x_i}  (\nabla \cdot  V)),\quad \hbox{ in } \D'(\Omega).
	\end{equation}  
	By density, we can take  $\xi H_\sigma(\partial_{x_i}v) $ as a test function in \eqref{bvint1} where $\xi \in H^2(\Omega)$  is    compactly supported  in $\Omega$ and  $H_\sigma$   is given by \eqref{Hsigma}. We obtain 
	\begin{equation}\label{bvint2}
		\begin{array}{cc} 	\int_\Omega  \Big( 	\partial_{x_i}	 v \:  \xi H_\sigma(\partial_{x_i}v)  +\nabla 	\partial_{x_i} p   \cdot \nabla (\xi H_\sigma(\partial_{x_i}v){)} \Big)  \: dx - \int_\Omega  	 \partial_{x_i}  v\: V \cdot \nabla (\xi H_\sigma (\partial_{x_i}v)) \: dx  \\  \\      =	\int_\Omega  \partial_{x_i} f\: \xi H_\sigma (\partial_{x_i}v) \: dx -   \int_\Omega     ( \nabla v\cdot   \partial_{x_i} V  +  v\: \partial_{x_i}  (\nabla \cdot  V) ) \:  \xi H_\sigma (\partial_{x_i}v)\: dx  . 
	\end{array} 	\end{equation} 
	To pass to the limit as $\sigma\to 0,$  we see first  that
	\begin{equation} \label{triv}
		H_\sigma '(\partial_{x_i}v)\: \partial_{x_i}v= \frac{1}{\sigma} \: \partial_{x_i}v  \: \chi_{[\vert \partial_{x_i} {v}\vert \leq \sigma ]}\:  \to 0 ,\quad \hbox{ in }L^\infty(\Omega)\hbox{-weak}^*.
	\end{equation}
	So,   the last term of the first part of \eqref{bvint2} satisfies  
	\begin{eqnarray}
		\lim_{\sigma \to 0}	 \int_\Omega  	 \partial_{x_i}  v\: V \cdot \nabla (\xi H_\sigma (\partial_{x_i}v)) \: dx &=&     \int_\Omega  	 \vert \partial_{x_i}  v\vert \: V \cdot \nabla  \xi  \: dx +  \lim_{\sigma  \to 0}	 \int_\Omega   \partial_{x_i}v  \: \nabla   \partial_{x_i}v  \cdot  V   H_\sigma '(\partial_{x_i}v) \: \xi  \: dx \\  
		&=&   \int_\Omega  	 \vert \partial_{x_i}  v\vert \: V \cdot \nabla  \xi.
	\end{eqnarray}
	On the other hand, we see that 
	\begin{eqnarray}
		\int _\Omega \nabla 	\partial_{x_i} p   \cdot \nabla (\xi H_\sigma (\partial_{x_i}v) ) \: dx  &=&\int_\Omega   H_\sigma (\partial_{x_i}v)  	\nabla 	\partial_{x_i} p   \cdot \nabla \xi   \: dx  + \int_\Omega  \xi\: 	\nabla 	\partial_{x_i} p   \cdot \nabla  H_\sigma (\partial_{x_i}v) \: dx .  
	\end{eqnarray}
	Since $\sign_0 (\partial_{x_i}v)=\sign_0 (\partial_{x_i}  p),$   the first term satisfies
	\begin{equation}
		\lim_{\sigma\to 0} \int  H_\sigma (\partial_{x_i}v)  	\nabla 	\partial_{x_i} p   \cdot \nabla \xi   \: dx  =-   \int   \vert \partial_{x_i} p\vert  \: \Delta  \xi   \: dx.  
	\end{equation}  
	As the second term, we have 
	\begin{eqnarray}
		\lim_{\sigma\to 0} 	\int_\Omega  \xi\: 	\nabla 	\partial_{x_i} p   \cdot \nabla  H_\sigma (\partial_{x_i}v) \: dx &=&   \lim_{\sigma\to 0} 	\int_\Omega  \xi\: H_\sigma '(\partial_{x_i}v)\: 	\nabla 	\partial_{x_i} p   \cdot \nabla \partial_{x_i}v    \: dx \\  
		&=&   \lim_{\sigma\to 0} 	\int_\Omega  \xi\: H_\sigma '(\partial_{x_i}v)\: 	\nabla   (\beta'(v)\partial_{x_i}v)   \cdot \nabla \partial_{x_i}v    \: dx\\  
		&=&   \lim_{\sigma\to 0} 	\int_\Omega  \xi\: H_\sigma '(\partial_{x_i}v)\: 	\beta'(v) \: \Vert \nabla  \partial_{x_i}v\Vert^2   \: dx   \\   &  & +    \lim_{\sigma\to 0} 	\int_\Omega  \xi\: H_\sigma '(\partial_{x_i}v)\: \partial_{x_i}v \: \beta''(v) 	\nabla  v    \cdot \nabla \partial_{x_i}v    \: dx  \\  
		&\geq &  \lim_{\sigma\to 0} 	\int_\Omega  \xi\: H_\sigma '(\partial_{x_i}v)\: \partial_{x_i}v \: \beta''(v) 	\nabla  v    \cdot \nabla \partial_{x_i}v    \: dx \\   &= &0, 
	\end{eqnarray}
	where we use again \eqref{triv}.  
	So,  letting   $\sigma\to 0$ in \eqref{bvint2} and  	using  again the fact that   $\sign_0 (\partial_{x_i}v)=\sign_0 (\partial_{x_i}  p),$        
	we get    
	$$ \begin{array}{c}   
		\vert \partial_{x_i}	 v \vert -  \Delta 	\vert \partial_{x_i} p \vert  +    \nabla \cdot( \vert \partial_{x_i}  v\vert \: V)     \leq \sign_0 (\partial_{x_i}  v) 	\partial_{x_i} f    -           ( \nabla v\cdot   \partial_{x_i} V  \\   \\   +  v\: \partial_{x_i}(\nabla \cdot  V) ) \: \sign_0 (\partial_{x_i}v) \quad \hbox{ in } \D'(\Omega).
	\end{array}  $$  
	At last, using the fact that 
	$\vert \nabla v\cdot   \partial_{x_i} V\vert \leq   \sum_{k}  \vert    \partial_{x_k} v\vert   \:    \sum_{k}  \vert       \partial_{x_i} V_k  \vert  ,$ 
	the result of the lemma follows.   
\end{proof}

\begin{proof}[\textbf{Proof of Theorem \ref{tbvm}}] 	Under the assumptions of Theorem  \ref{tbvm}, for any $\epsilon >0,$ let us consider $f_\eps$  a  regularization   of $f$  satisfying $f_\eps \to f$ in $L^1(\Omega)$ and 
	\begin{equation}
		\int_\Omega \xi\: 	\vert \partial_{x_i} f_\eps\vert \: dx \to  \int_\Omega \xi\: d\: 	\vert \partial_{x_i} f\vert  \quad   \hbox{ for any } \xi\in \C_c(\Omega)  \hbox{ and }  i=1,...N.
	\end{equation} 	
	Thanks to Lemma \ref{lexistreg},  we consider $v_\eps$ be the solution of the problem \eqref{st}, where we replace  $f$ by the regularization  $f_\eps$.    Applying   Lemma \ref{lbvreg} by replacing  $V$ by $\lambda V$ and $\beta^{-1}$ by $\lambda\: \beta_\eps^{-1}$,   we obtain  
	\begin{equation} 
		\begin{array}{c}
			\int _\Omega      \vert \partial_{x_i}v_\eps \vert     \: \xi\: dx     -  \lambda     \int_\Omega     \sum_{k=1}^N   \vert       \partial_{x_i} V_k  \vert  \:       \sum_{k=1}^N  \vert \partial_{x_k}v_\eps\vert    \: \xi\: dx   
			\leq  \lambda   \sum_{k=1}^N  \int _\Omega 	\vert \partial_{x_i} p_\eps \vert  \: (\Delta \xi)^+\:    dx + 
			\int_\Omega  \vert 	\partial_{x_i} f_\eps \vert \: \xi \: dx   \\      +   	 \lambda  \int _\Omega    \vert    
			v_\eps\vert \:  \vert \partial_{x_i} ( \nabla \cdot     V  ) \vert 	 \: \xi \: dx    + \lambda   \:    \int _\Omega     \vert \partial_{x_i}v_\eps   \vert\:    V\cdot \nabla \: \xi\: dx   	\quad \hbox{ for any }i=1,...N \hbox{ and }0\leq \xi\in \D(\Omega).
		\end{array}
	\end{equation}   
	By density, we can take $\xi=\omega_h$ as given in the theorem, so that the last term   is nonpositive, and we have 
	\begin{equation} 
		\begin{array}{c}
			\int _\Omega      \vert \partial_{x_i}v \vert     \: \omega_h\: dx     -  \lambda        \:    \sum_{k}  \Vert       \partial_{x_i} V_k  \Vert_{L^\infty(\Omega)}   \:    \int _\Omega  \sum_{k} \vert \partial_{x_k}v\vert    \: \omega_h\: dx      	\leq  \lambda  \sum_{k}   \int _\Omega 	\vert \partial_{x_i} p \vert  \: (\Delta  \omega_h)^+\:    dx    \\   + 
			\int_\Omega  \vert 	\partial_{x_i} f  \vert \: \omega_h \: dx   +  	 \lambda  \int_\Omega     \vert    
			v\: \vert \:  \vert \partial_{x_i} ( \nabla \cdot     V  ) \vert 	 \: \omega_h  \: dx   	,\quad \hbox{ for any }i=1,...N.
		\end{array}
	\end{equation}   
	Summing up,  for $i=1,...N,$  and using the definition of $\lambda_1,$ we deduce that  
	\begin{equation} 
		\begin{array}{c}
			\sum_{i} 		\int _\Omega      \vert \partial_{x_i} v_\eps \vert     \: \omega_h\: dx     -  \lambda \lambda_1   \sum_{k}   \int_\Omega   \vert \partial_{x_k}v_\eps\vert    \: \omega_h\: dx   	\leq  \lambda  \sum_{i}   \int _\Omega  	(\Delta \omega_h)^+\: \vert \partial_{x_i} p_\eps \vert  \: dx    \\    + 
			\int_\Omega \sum_{i}  \vert 	\partial_{x_i} f_\eps  \vert \: \omega_h \: dx    + \lambda \:  \int_\Omega    \vert v_\eps\vert \sum_{i}      \vert \partial_{x_i} (\nabla \cdot   V  )  \vert  \: \omega_h \: dx  ,
		\end{array} 
	\end{equation}   
	and then the corresponding property \eqref{bvstat} follows for $v_\eps.$ 
	Thanks to  \eqref{lqstat} and \eqref{lmst}, we know that $v_\eps$ and $\partial_{x_i}p_\eps$ are bounded in $L^2(\Omega).$ This implies that, for any $\omega\subset\!\subset \Omega,$  $\sum_{i} 		\int_\omega      \vert \partial_{x_i} v_\eps \vert     \:   dx$ is bounded.  So,  $v_\eps$ is bounded in $BV_{loc}(\Omega).$  Combining this with the $L^1-$bound \eqref{lqstat}, it implies  in particular,  taking a subsequence if necessary,     the convergence in \eqref{weakueps} holds to be true also in $L^1(\Omega)$ and   then $v\in BV_{loc}(\Omega).$   At last, letting $\eps\to 0$ in \eqref{bvestreg} and, using moreover \eqref{strongpeps} and the lower semi-continuity of variation measures   $\vert \partial_{x_i} v_\eps\vert$, we deduce \eqref{bvstat}.
\end{proof}

\begin{remark}\label{Rbvcond}
	See that we use the condition $(T'3)$  for the proof of  local $BV$-estimate through $w_h$ we introduce  in Theorem \ref{tbvm}.  Indeed,  local $BV$ estimates follows  from Lemma \ref{lbvreg}  once  the term $  \lambda   \:    \int_\Omega      \vert \partial_{x_i}v_\eps   \vert\:    V\cdot \nabla \: \xi\: dx$ nonpositive. 
	Clearly, the construction of  $\omega_h$ by using $d(.,\partial \Omega)$ is  basically connected to the condition $(T'3)$.  Otherwise, this condition could be replaced definitely by the assumption 
	\begin{equation}
		\exists\:  0\leq \omega_h\in \mathcal H^2(\Omega_h),\: \hbox{support}(\omega_h)\subset \!\subset \Omega, 
		\omega_{h_{/\Omega_h}}\equiv 1\hbox{ and } \eqref{HypsupportV2}\hbox{ is fulfilled.}
	\end{equation}
\end{remark}

 \section{Proofs for the evolution problem}
\setcounter{equation}{0}

 Thanks to Proposition \ref{texistm}, one sees that for $\eps$ small enough, the $\eps$-approximate solution, $u_\eps$, as given in \eqref{epsapprox} is well defined. To prove Theorem \ref{texistevolm}, we need to pass to the limit by letting $\eps\to 0.$

\begin{lemma}\label{lemmaeps} Let $(u_\eps,p_\eps)$ a solution of problem (\ref{weakeps0}), then
	
	\begin{enumerate}
	
		\item For any $q\in [1,\infty],$ we have  
		  \begin{equation} \label{lquepsevol} 
			\Vert u_\eps(t)\Vert_{L^q(\Omega)}  \leq  \M_q^\eps (t)\quad  \hbox{ for any }t\geq 0,   
		\end{equation}     
 \begin{equation} 
 	\M_q^\eps(t) := 
 		   \left( \Vert u_0 \Vert_{L^q(\Omega)} + \int_0^t   \Vert f_\eps (t)\Vert_{L^q(\Omega)} \: dt  \right) e^{  (q-1)\:  \int_0^t  \Vert (\nabla \cdot V)^-\Vert_{L^\infty(\Omega)}  }.
				\end{equation}   
		\item  For each $\eps >0,$  we have  
			\begin{equation}\label{lmuepsevol}
			\begin{array}{c} 
			  \int_\Omega j( u_\eps(t)) +  \int_0^t\!\!  \int_\Omega \vert \nabla p_\eps\vert^2  \leq   \int_0^t\!\!     \int_\Omega f_\eps\:p_\eps \: dx +   	   \int_0^t\!\!   \int \left(  \nabla \cdot  V\right)^- \:    p_\eps \: u_\eps     \: dx +   \int_\Omega j( u_{0}).  
			\end{array} 
		\end{equation} 	  
		In particular, we have 
		\begin{equation}\label{estim2}  	
			\frac{d}{dt} \int_\Omega j( u_\eps) \: dx+  \int_\Omega \vert \nabla p_\eps\vert^2\: dx    \leq    C (N,\Omega)   \left(  	\int_\Omega \vert f_\eps\vert^2\: dx +	 \Vert (\nabla \cdot V)^-  \Vert_{L^\infty(\Omega)}\: (\M_2^\eps(t))^2      \right ) \quad \hbox{ in }\D'(0,T).
		\end{equation}

	 \item If $u_0\geq 0$ and $f\geq 0,$ then ${u_\eps} \geq 0.$	
	
	\end{enumerate}	
\end{lemma}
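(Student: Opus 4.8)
The plan is to push each of the stationary estimates of Proposition~\ref{texistm} and Lemma~\ref{lexistreg} through the Euler scheme and then either iterate or sum over the time steps; throughout, recall that on $[t_i,t_{i+1})$ the pair $(u_\eps,p_\eps)$ equals $(u_{i+1},p_{i+1})$, the solution of \eqref{sti}, which is exactly \eqref{st} with $\lambda=\eps$, field $V_i$ and data $u_i+\eps f_i$. Assertion~(3) is then immediate by induction on $i$: if $u_i\ge 0$, then $u_i+\eps f_i\ge 0$ (since $f\ge0$ forces each $f_i\ge0$), so the sign property of Proposition~\ref{texistm} (``$f\ge0\Rightarrow v\ge0$'') gives $u_{i+1}\ge 0$; the base case is $u_0\ge0$, whence $u_\eps\ge0$.

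For~(1) I would apply \eqref{lqstat} at step $i$ with $v=u_{i+1}$ and forcing $u_i+\eps f_i$, obtaining the scalar recursion
\[
\bigl(1-(q-1)\eps\,\|(\nabla\cdot V_i)^-\|_{L^\infty(\Omega)}\bigr)\,\|u_{i+1}\|_{L^q(\Omega)}\le \|u_i\|_{L^q(\Omega)}+\eps\,\|f_i\|_{L^q(\Omega)}.
\]
Since $0<\eps\le\eps_0$ keeps the bracket bounded away from zero, I would resolve this by a discrete Gronwall argument: iterating multiplies $\|u_0\|_{L^q(\Omega)}+\sum_i\eps\|f_i\|_{L^q(\Omega)}$ by the product $\prod_i\bigl(1-(q-1)\eps\|(\nabla\cdot V_i)^-\|_\infty\bigr)^{-1}$. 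Here $\sum_i\eps\|f_i\|_{L^q(\Omega)}=\int_0^t\|f_\eps\|_{L^q(\Omega)}\,dt$ by the very definition of $f_\eps$, while convexity of $r\mapsto r^-$ (Jensen, applied to the time average defining $V_i$) bounds $\eps\sum_i\|(\nabla\cdot V_i)^-\|_\infty$ by $\int_0^t\|(\nabla\cdot V)^-\|_{L^\infty(\Omega)}\,dt$; turning the product into exponential form then produces the factor $e^{(q-1)\int_0^t\|(\nabla\cdot V)^-\|_\infty}$ and hence $\M_q^\eps(t)$. The case $q=\infty$ is identical, starting from \eqref{linftystat}.

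For~(2) I would use the \emph{pointwise} form of the stationary energy estimate that already appears inside the proof of Lemma~\ref{lexistreg} (before passing to the $L^\infty$ norm): testing \eqref{sti} with $p_{i+1}$ gives
\[
\int_\Omega u_{i+1}p_{i+1}+\eps\int_\Omega|\nabla p_{i+1}|^2\le \int_\Omega(u_i+\eps f_i)\,p_{i+1}+\eps\int_\Omega(\nabla\cdot V_i)^-\,u_{i+1}p_{i+1}.
\]
The decisive step is to telescope: since $p_{i+1}\in\beta^{-1}(u_{i+1})=\partial j(u_{i+1})$, convexity of $j$ yields $j(u_{i+1})-j(u_i)\le p_{i+1}(u_{i+1}-u_i)$, hence $\int_\Omega\bigl(j(u_{i+1})-j(u_i)\bigr)\le\int_\Omega u_{i+1}p_{i+1}-\int_\Omega u_i p_{i+1}$. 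Substituting and summing over $i$ collapses the $j$-terms and rewrites the remaining sums as the time integrals of \eqref{lmuepsevol}. The ``in particular'' bound \eqref{estim2} then follows by estimating its right-hand side with Young's and Poincaré's inequalities, absorbing a fraction of $\int_\Omega|\nabla p_\eps|^2$ into the left-hand side and using the $L^2$-bound $\M_2^\eps$ from~(1).

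I expect the main obstacle to sit in~(1): the accounting that converts the discrete backward-Euler product $\prod_i(1-(q-1)\eps\|(\nabla\cdot V_i)^-\|_\infty)^{-1}$ into the clean exponential defining $\M_q^\eps(t)$ rests on the comparison $-\log(1-x)\approx x$ together with the Jensen control of $V_i$, and one must check that all constants stay uniform as $0<\eps\le\eps_0$ (the bound becoming exact in the limit $\eps\to0$, where it reproduces $\M_q$ of Theorem~\ref{texistevolm}). The convexity/telescoping identity in~(2) is the other point requiring care, but it is structurally short.
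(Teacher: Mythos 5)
Your proposal follows essentially the same route as the paper: push the stationary estimates of Proposition \ref{texistm} through the Euler scheme with data $u_i+\eps f_i$, resolve the resulting $L^q$ recursion by a discrete Gronwall argument for (1), telescope the convexity inequality $(u_{i+1}-u_i)\,p_{i+1}\ge j(u_{i+1})-j(u_i)$ and sum over the steps for (2), and induct on the stationary positivity result for (3). The product-to-exponential subtlety you flag in (1) is real, but it is equally present in the paper's own proof, which simply invokes Gronwall on the implicit (backward) inequality; both arguments give the clean exponential in $\M_q^\eps$ only up to a $(1+O(\eps))$ correction in the exponent, which is harmless for the intended limit $\eps\to 0$.
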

\begin{proof}
	Thanks to Proposition \ref{texistm}, the  sequence $  (u_i)_{i=1,...n}$ of solutions of \eqref{sti} is well defined in $L^2(\Omega) $  and satisfies  
	\begin{equation}\label{stwsi}
		\displaystyle  \int_\Omega u_{i+1} \:\xi+   \eps \: \int_\Omega  \nabla p_{i+1}  \cdot  \nabla\xi  -  \eps \:
		\int_\Omega  u_{i+1}  \:  V_i\cdot \nabla  \xi    =  \int_\Omega u_{i}   \:\xi+  \int_{i\eps  }^{(i+1)\eps  } \int_\Omega f_i\: \xi \quad \hbox{ for }i={0,...,n-1} .
	\end{equation} 
	Thanks to \eqref{lqstat}, we have
\begin{eqnarray}
	 \Vert u_i\Vert_{L^q(\Omega)} &\leq&  	 \Vert u_{i-1}\Vert_{L^q(\Omega)}  + \eps\: \Vert f_{i}\Vert_{L^q(\Omega)}   +   \eps\: (q-1) \:   \Vert (\nabla \cdot V_i)^-\Vert_{L^\infty(\Omega)}  \Vert u_{i}\Vert_{L^q(\Omega)} \qquad \mbox{ and }\;  u_i \geq 0.
  \end{eqnarray}
By induction, this implies that,  for any $t\in [0,T),$ we have  
  \begin{equation}  
  \Vert u_\eps(t)\Vert_{L^q(\Omega)}    \leq     \Vert u_0 \Vert_{L^q(\Omega)}    + \int_0^T \Vert f_\eps(t)\Vert_{L^q(\Omega)} \: dt    +  (q-1) \:        \int_0^T \Vert (\nabla \cdot V_i)^-\Vert_{L^\infty(\Omega)}   \Vert u_\eps(t)\Vert_{L^q(\Omega)}\: dt   .
	\end{equation}  
Using Gronwall Lemma, we deduce  \eqref{lquepsevol}.  
	Using  the fact that   
	\begin{equation}
		\left(u_i-u_{i-1} \right) p_i \geq  j (u_i) -j(u_{i-1})
	\end{equation}
and 
\begin{equation}
	\int u_i  \:  V_i \cdot \nabla p_i \leq \int \int\left(\nabla \cdot V_i\right)^- p_i\: u_i , 
\end{equation}	we get  
	\begin{equation} 
		    \int_\Omega j(u_i)+ \eps \int_\Omega \vert \nabla p_i\vert^2  \leq    \eps  \int_\Omega f_i\:p_i \: dx +   	\eps  \:  \int \left(  \nabla \cdot  V_i\right)^- \:    p_i \: u_i     \: dx  +    \int_\Omega j( u_{i-1}).
	\end{equation} 
 Summing this identity for $i=1,....,$  and using the definition of $u_\eps$, $p_\eps$ and $f_\eps,$ we get \eqref{lmuepsevol}.  At last, it is not difficult to see that \eqref{estim2} is a simple consequence of  \eqref{lquepsevol} and \eqref{lmuepsevol} combined with Poincaré and Young  inequalities.  
\end{proof}

\bigskip 
\begin{proof}[\textbf{Proof of Theorem \ref{texistevolm}}]  
	See that \eqref{convueps} and \eqref{convpeps}  is a consequence of 
	\eqref{lquepsevol} with $q=2$ and  \eqref{estim2} respectively.   	As to $\tilde u_\eps,$ we see that  \begin{equation}\label{tildeeps-rel}
		\tilde  u_\eps(t)-  u_\eps(t) =  (t-t_i)\: \partial_t \tilde  u_\eps(t)     ,\quad \hbox{ for any }t\in ]t_{i-1},t_{i}],\: i=1,... n.   
	\end{equation}
	Since $u_\eps,$ $p_\eps$ and $f_\eps$ are bounded in $L^2(Q),$ $L^2(0,T;H^1_{\Gamma_D}(\Omega)) $ and $L^2(Q),$ respectively, we see that $\partial_t \tilde  u_\eps$ is bounded in $L^{2} (0,T;H^{-1}_{\Gamma_D}(\Omega) ).$ Combining this with the fact that $\tilde u_\eps$ is bounded in $L^2(Q),$ we deduce in turn \eqref{convtildeueps}.  Then, letting $\eps\to 0$ in \eqref{weakeps0}, we deduce that the couple $(u,p)$ satisfies the weak formulation  \eqref{evolweak}.  To prove that $u\in \beta(p)$ a.e. in $Q,$   we use the   very weak Aubin's result   (cf.  \cite{ACM} or \cite{Moussa}).    We combine  \eqref{convueps}, \eqref{convpeps} with the fact that    $\partial_t \tilde  u_\eps$ is bounded in $L^{2} (0,T;H^{-1}_{\Gamma_D}(\Omega) ),$   to deduce first that, as $\tau\to 0,$ we have  
	\begin{equation}
		\int_0^T\!\!\int_\Omega    u_\eps \:  p_\eps \: \varphi\: dxdt \to 	\int_0^T\!\!\int_\Omega u \: p  \: \varphi\: dxdt  \quad \hbox{ for any }\varphi\in \D ( Q). 
	\end{equation}

	Then, using the fact that $u_\eps \in \beta(p_\eps)$ a.e. in $Q$, we conclude that
	$$
	u\in \beta(p)\quad\text{a.e. in }Q,
	$$
	by the Minty trick (closure of maximal monotone graphs): from $u_\eps\rightharpoonup u$ in $L^2(Q)$,
	$p_\eps\rightharpoonup p$ in $L^2(Q)$ (or in $L^2(0,T;H^1)$), and the product convergence
	$\int_Q u_\eps p_\eps\varphi \to \int_Q up\,\varphi$ for all $\varphi\in\mathcal D(Q)$, one deduces that the limit pair
	$(p,u)$ belongs to the graph of $\beta$.
	
\end{proof}

\begin{lemma}\label{Lbvestim}
	Under the assumption of Theorem \ref{tbvm},    for any $\omega\subset\! \subset\Omega,$ there   exists   $0\leq \xi \in \mathcal C^2_c(\Omega)$ such that   $\xi\chi_{\omega}\equiv 1,$ and  
\begin{equation}\label{bvuepsevol} 
		\begin{array}{c}
	  	\sum_{j=1}^N 	\int        \xi\: d \vert \partial_{x_j} u_\eps(t) \vert     \leq         e^{\lambda_VT}  \sum_{j=1}^N   \left(       \int  (\Delta \xi)^+ \: 	{\vert \partial_{x_j} p_\eps \vert}  \: dx   +      \int \xi\:  \vert u_\eps\vert\: {\vert \partial_{x_j}   ( \nabla \cdot   V   ) \vert} \: dx  \right. \\  \left.  +	   \int \xi\: d\vert 	\partial_{x_j} f_\eps(t) \vert    +		 \int \xi\: d \vert 	\partial_{x_j} u_{0}  \vert   \right)   ,  
		\end{array}	
	\end{equation}   
\end{lemma}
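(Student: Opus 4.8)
The plan is to read each layer of the implicit Euler scheme \eqref{sti} as a copy of the stationary problem \eqref{st} and then to propagate the stationary $BV$-estimate of Theorem \ref{tbvm} across the time layers by a discrete Gronwall argument. Indeed, the $(i+1)$-th equation in \eqref{sti} is exactly \eqref{st} with diffusion parameter $\lambda=\eps$, vector field $V_i$, and right-hand side $u_i+\eps f_i$. Before invoking Theorem \ref{tbvm} I would check that the averaged field $V_i=\frac1\eps\int_{i\eps}^{(i+1)\eps}V(s)\,ds$ inherits all the hypotheses required there: the $W^{1,\infty}$ bound on $V_i$ and the $W^{1,2}_{loc}$ bound on $\nabla\cdot V_i$ follow from Jensen's inequality, while the outward-pointing condition \eqref{HypBV} is linear in $V$ and hence stable under time-averaging. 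Consequently the cut-off $\omega_h$ produced in the remark after Theorem \ref{tbvm} satisfies \eqref{HypsupportV2} for \emph{every} $V_i$ simultaneously, and I fix $\xi=\omega_h$ with $h$ small enough that $\omega\subset\!\subset\Omega_h$, which guarantees $\xi\chi_\omega\equiv1$.

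Applying Theorem \ref{tbvm} to $u_{i+1}$ and using the subadditivity of total variation, $|\partial_{x_j}(u_i+\eps f_i)|\le|\partial_{x_j}u_i|+\eps|\partial_{x_j}f_i|$, estimate \eqref{bvstat} turns into the one-step recursion
\[
(1-a_i)\,A_{i+1}\ \le\ A_i+\eps\,B_{i+1}+\eps\,F_i+\eps\,G_{i+1},
\]
where $a_i:=\eps\sum_{k,l}\|\partial_{x_k}(V_i)_l\|_{L^\infty(\Omega)}$, $A_i=\sum_j\int_\Omega\xi\,d|\partial_{x_j}u_i|$, $B_{i+1}=\sum_j\int_\Omega(\Delta\xi)^+|\partial_{x_j}p_{i+1}|\,dx$, $F_i=\sum_j\int_\Omega\xi\,d|\partial_{x_j}f_i|$ and $G_{i+1}=\sum_j\int_\Omega\xi\,|u_{i+1}|\,|\partial_{x_j}(\nabla\cdot V_i)|\,dx$. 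The smallness needed to divide by $1-a_i$ is available for $\eps$ small: since $\sum_{k,l}\|\partial_{x_k}V_l(\cdot)\|_{L^\infty}\in L^1(0,T)$ by $(T'1)$, absolute continuity of the integral forces $\max_i a_i\to0$, so $a_i\le\frac12$ and $\frac1{1-a_i}\le e^{2a_i}$.

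Iterating the recursion and telescoping yields
\[
A_n\ \le\ \Big(\prod_i\frac1{1-a_i}\Big)\Big(A_0+\eps\sum_k(B_k+F_{k-1}+G_k)\Big),
\]
and the amplification factor is controlled uniformly in $\eps$ by $\prod_i\frac1{1-a_i}\le\exp\big(2\sum_i a_i\big)\le\exp\big(2\int_0^T\sum_{k,l}\|\partial_{x_k}V_l(s)\|_{L^\infty}\,ds\big)$, which we abbreviate $e^{\lambda_V T}$. Finally, since $p_\eps$, $u_\eps$, $f_\eps$ are the piecewise-constant interpolants with layer length $\eps$, the weighted sums $\eps\sum_k B_k$, $\eps\sum_k F_{k-1}$ and $\eps\sum_k G_k$ are precisely the space-time integrals over $(0,t)$ of $(\Delta\xi)^+|\partial_{x_j}p_\eps|$, of $\xi\,d|\partial_{x_j}f_\eps|$, and of $\xi|u_\eps||\partial_{x_j}(\nabla\cdot V)|$ that appear on the right of \eqref{bvuepsevol}; for the last one I use Jensen's inequality to replace $\eps|\partial_{x_j}(\nabla\cdot V_i)|$ by $\int_{i\eps}^{(i+1)\eps}|\partial_{x_j}(\nabla\cdot V)|$. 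Together with $A_0=\sum_j\int_\Omega\xi\,d|\partial_{x_j}u_0|$ this gives \eqref{bvuepsevol}.

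I expect the main obstacle to be the uniformity in $\eps$: the product of the one-step factors $\frac1{1-a_i}$ must stay bounded independently of the number of layers $n\sim T/\eps$. This is exactly where having only $L^1$-in-time control of $\nabla V$ (rather than $L^\infty$) is delicate—the argument works because the quantity that is actually summed is $\sum_i a_i\le\int_0^T\sum_{k,l}\|\partial_{x_k}V_l\|_{L^\infty}$, a fixed finite number, rather than $n$ copies of a pointwise supremum. A secondary technical point is the verification that a single cut-off $\omega_h$ meets condition \eqref{HypsupportV2} against all the $V_i$ at once, which rests precisely on the stability of \eqref{HypBV} under averaging noted in the first paragraph.
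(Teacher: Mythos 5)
Your proposal is correct and follows essentially the same route as the paper: apply the stationary estimate of Theorem \ref{tbvm} to each layer of the Euler scheme (with $\lambda=\eps$, field $V_i$, and source $u_i+\eps f_i$), use subadditivity of the total variation for $|\partial_{x_j}(u_i+\eps f_i)|$, and close by a discrete Gronwall iteration yielding the factor $e^{\lambda_V T}$. If anything, your writeup is more careful than the paper's on two points it leaves implicit: the verification that the time-averaged fields $V_i$ inherit the hypotheses of Theorem \ref{tbvm} (so that a single cutoff $\omega_h$ serves all layers), and the uniform-in-$\eps$ control of the amplification factor $\prod_i(1-a_i)^{-1}$ when $\nabla V$ is only $L^1$ in time.
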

\begin{proof} We fix  $\xi  $ as given by Theorem \ref{tbvm}.   Let us prove that 
		\begin{equation}\label{bvuepsevol0} 
		\begin{array}{c}
			\sum_{j=1}^N \left( 	\int        \xi\: d \vert \partial_{x_j} u_\eps(t) \vert    -   \int_{0}^{T}\!\! \!   	\int        \xi\: d \vert \partial_{x_j} u_\eps(t) \vert  \: dt  \right) \leq   	\sum_{j=1}^N     \int_{0}^{T} \!\! \! \left(     \int  (\Delta \xi)^+\: 	\vert \partial_{x_j} p_\eps \vert  \: dx +       \int \xi\:   \vert u_\eps\vert\:  \vert \partial_{x_j}   ( \nabla \cdot   V   ) \vert {\: dx}   \right.  \\   +	\left.     \int \xi\: d\vert 	\partial_{x_j} f_\eps(t) \vert    +		  \int \xi\: d \vert 	\partial_{x_j} u_{0}  \vert \right)     ,  
		\end{array}	
	\end{equation}  
Recall that $u_i$ is a solution of 
$$u_i-\eps\: \Delta p_i+\eps\: \nabla \cdot (u_i\: V_i)=\eps\: f_i,\quad \hbox{ with } u_i\in \beta(p_i) \hbox{ for each  }i=1,...,n.$$
 	Thanks to Theorem \ref{tbvm}, we have 
	 	\begin{equation} 
		\begin{array}{c}
			(1-\eps \lambda_V ) 	\sum_{k=1}^N \int     \xi  \:  d\vert \partial_{x_k} u_i\vert   \leq  \eps  \sum_{k=1}^N      \int  (\Delta \xi)^+\:   \vert \partial_{x_k} p_i \vert   \: dx      + \eps   \sum_{k=1}^N   	  \int        \xi \:  \vert u_i\vert \:   \vert  \partial_{x_k}   ( \nabla \cdot  V_i   )\vert     \: dx    \\   + \eps 	\sum_{k=1}^N \int       \xi \: d \vert \partial_{x_k} f_i\vert     + \sum_{k=1}^N 	 \int     \xi  \:  d\vert \partial_{x_k} u_{i-1}\vert  ,    .
		\end{array}
	\end{equation} 
where we use the fact that 
$$\vert \partial_{x_k} (\eps\: f_i+u_{i-1})\vert  \leq \eps\: \vert \partial_{x_k}  f_i \vert  + \vert \partial_{x_k}  u_{i-1}\vert .$$
Using the definition of $f_i,$ we get 
	 	\begin{equation}  
	\begin{array}{c}
		(1-\eps \lambda_V ) 	\sum_{k=1}^N \int     \xi  \:  d\vert \partial_{x_k} u_i\vert   \leq  \eps   \sum_{k=1}^N      \int  (\Delta \xi)^+\:  \vert \partial_{x_k} p_i \vert   \: dx  + \eps   \sum_{k=1}^N   	  \int        \xi \:  \vert u_i\vert \:   \vert  \partial_{x_k}   ( \nabla \cdot  V_i   )\vert     \: dx    \\    +  	\sum_{k=1}^N \int_{t_i}^{t_{i+1}}\!\! \! \int       \xi \: d \vert \partial_{x_k} f \vert          + \sum_{k=1}^N 	 \int     \xi  \:  d\vert \partial_{x_k} u_{i-1}\vert      .
	\end{array}
\end{equation} 
Iterating from $i=1$ to $n,$ this implies that  for any $t\in [0,T),$ we have 
	\begin{equation}  
	\begin{array}{c}
		\sum_{k=1}^N \int     \xi  \:  d\vert \partial_{x_k} u_\eps(t)\vert  - 	 \lambda_V  	\sum_{k=1}^N  \int_0^T\!\!\!   \int     \xi  \:  d\vert \partial_{x_k} u_\eps(t)\vert \: dt  \leq      \sum_{k=1}^N     \int_0^T\!\!\!  \int  (\Delta \xi)^+\:  \vert \partial_{x_k} p_\eps \vert   \: dxdt   \\    +     \sum_{k=1}^N   	 \int_0^T\!\!\!   \int        \xi \:  \vert u_\eps \vert \:   \vert  \partial_{x_k}   ( \nabla \cdot  V_\eps    )\vert     \: dxdt   +  	\sum_{k=1}^N \int_0^T\!\!\!  \int       \xi \: d \vert \partial_{x_k} f \vert   dt       + \sum_{k=1}^N 	 \int     \xi  \:  d\vert \partial_{x_k} u_0\vert      .
	\end{array}
\end{equation} 
Thus the result of the lemma follows by applying Gronwall's inequality. 
\end{proof}

\begin{proof}[\textbf{Proof of Theorem \ref{tbvlocuevol}}] 
For a fixed $t\in (0,T)$ and $\omega\subset\! \subset\Omega,$ we know by Lemma \ref{Lbvestim}  that  there   exists   $0\leq \xi \in \mathcal C^2_c(\Omega)$ such that   $\xi\chi_{\omega}\equiv 1$ and 

{
\begin{equation} 
	\begin{array}{c}
		\sum_{j=1}^N 	\int        \xi\: d \vert \partial_{x_j} u_\eps(t) \vert     \leq         e^{\lambda_VT}  \sum_{j=1}^N   \left(       \int  (\Delta \xi)^+ \: 	\vert \partial_{x_j} p_\eps \vert  \: dx   +      \int \xi\: 
		\abs{u_\eps} \vert \partial_{x_j}  ( \nabla \cdot   V   ) \vert {\: dx}  \right. \\
		\left.  +	   \int \xi\: d\vert 	\partial_{x_j} f_\eps(t) \vert    +		 \int \xi\: d \vert 	\partial_{x_j} u_{0}  \vert   \right)   ,  
	\end{array}	
\end{equation}  
}

Combining this with   \eqref{lquepsevol} and 	\eqref{estim2}, we can find  a constant $ C:= C(\omega,N,\Omega)$ depending only on $\omega$ and  Poincaré inequality, such that  
\begin{equation}\label{bvlocuevoleps} 
	\begin{array}{cc}
		\sum_{j=1}^N    \vert \partial_{x_j} u_\eps(t) \vert (\omega)	 \leq C\left( 1+ \int_0^T\!\! \int_\Omega \vert f_\eps\vert^2\: dx +	 \Vert (\nabla \cdot V)^-  \Vert_{L^\infty(\Omega)}\: (\M_2)^2     +  	\sum_{j=1}^N     \int  \vert  \partial_{x_j}(\nabla \cdot  V {)} \vert^2 \: dx     \right. \\ 
		\hspace*{1cm}	\left.     +	 \sum_{j=1}^N    	\int_0^T  \vert \partial_{x_j} f(t) \vert(\Omega)+  \sum_{j=1}^N    \vert \partial_{x_j} u_0 \vert(\Omega) \right),\quad \hbox{ for any }t\in (0,T).
	\end{array}
\end{equation}  
So, for any $\omega\subset\!\subset \Omega,$  $\sum_{i} 		\int_\omega      \vert \partial_{x_i} u_\eps \vert     \:   dx$ is bounded.  So,  $u_\eps(t)$ is bounded in $BV (\omega) $, and using     \eqref{convueps}, we deduce that    $u(t)\in BV (\omega).$   Moreover,    letting $\eps\to{0}$ in \eqref{bvlocuevoleps} and using  the lower semi-continuity of variation measures   $\vert \partial_{x_i} u_\eps(t)\vert$, we deduce  
\eqref{bvlocuevol} for the limit $u$.
 
\end{proof}

  	\section*{Acknowledgments } This work   was  supported by the CNRST of Morocco under the FINCOM program.  N. Igbida  is very grateful to the EST of Essaouira and its staff for their warm hospitality and the conducive work environment they provided.

 \vspace*{5mm}

\end{document}